\newcommand{\A}{\mathcal A}
\theoremstyle{definition}
\newtheorem{defn}{Definition}[section]
\newtheorem{example}[defn]{Example}
\newtheorem{remark}[defn]{Remark}
\theoremstyle{plain}
\newtheorem{theorem}[defn]{Theorem}
\newtheorem{prop}[defn]{Proposition}
\author{Allaoua Boughrira}
\email{aboughrira@student.ccc.edu}
\author{Hellen Colman}
\email{hcolman@ccc.edu}
\address{Department of Mathematics, Wright College,
 4300 N.\ Narragansett Avenue, Chicago, IL 60634 USA}
\thanks{H.C. is supported in part by the Simons Foundation.}
\title[A Motion Planning Algorithm in a lollipop graph]{A Motion Planning Algorithm in a lollipop graph}
\keywords{motion planning, topological complexity, graphs}
\subjclass[2010]{55P99, 55U99, 05C85 (Primary);  90C35, 05C90  (Secondary)}
\begin{document}

\begin{abstract} This paper is concerned with problems relevant to motion planning in robotics. Configuration spaces are of practical relevance in designing safe control schemes for robots moving on a track. The topological complexity of a configuration space is an integer which can be thought of as the minimum number of continuous instructions required to describe how to move robots between any initial configuration to any final one without collisions. We calculate this number for various examples of robots moving in different tracks represented by graphs. We present and implement an explicit algorithm for two robots to move autonomously and without collisions on a lollipop track.

\end{abstract}
\maketitle

\section{Introduction}
We study the navigation problem for two robots to move autonomously and without collisions on a lollipop-shaped track. 

This paper surveys results concerning the motion planning problem for robots moving on graphs and includes a new explicit algorithm for the case of two robots on a lollipop graph. This algorithm is optimal in the sense that the motion planning is performed with the minimal number of instabilities.

First we introduce the configuration space of distinct robots on a graph. Configuration spaces in mathematics were introduced in the sixties by Fadell and Neuwirth \cite{Fadell} and first used in robotics in the eighties \cite{Lozano,Latombe}. More recently, configuration spaces of robots moving on graphs have been studied by Farber \cite{farber2004collision} and Ghrist \cite{ghrist2001configuration} amongst others. 

The configuration space for two robots moving on a graph is the space of all feasible combined positions of the robots. The motion planning problem deals with assigning paths between initial and final configurations. We are interested in algorithms that assign outputs (paths) to inputs (initial and final configurations) in a continuous way. These algorithms are rare in real world situations since most algorithms will have discontinuities.  Farber introduced the notion of {\em topological complexity} of configuration spaces which measures the discontinuities in algorithms for robot navigation \cite{Farber2003}.  The topological complexity of a space is invariant under homotopy.

Our approach consists on presenting an explicit construction of the configuration space for the case of two robots moving along a lollipop track and then building a deformation retract of it that we call the skeleton. We calculate the topological complexity of the skeleton of the configuration space and exhibit an algorithm with the minimal number of instructions. Finally, we translate back our instructions to the physical space where the robots move. We provide a detailed algorithm with concrete instructions for two robots to move on a lollipop track between any initial and final positions. We implement this algorithm as well and show a simulation for several cases. 

We organize the paper in the following manner.  In Section 2 we introduce some basic notions and notations. Section 3 introduces the basic  ideas on continuous motion planning. Section 4 presents the topological complexity and its basic properties.  Section 5 provides a detailed study for the case of two robots moving on a circle track. We construct the configuration space, calculate its topological complexity and present an algorithm for this case that will set the basis for our main example.
Section 6 concerns the characterization of our main example: two robots moving on a lollipop track. We construct the configuration space as before and give an explicit algorithm for this case. We also implement and show a simulation for this case.

This paper is the result of an undergraduate research project at Wilbur Wright College supervised by Professor Hellen Colman.

\section{Preliminaries}
In this section we briefly review some basic definitions and establish notations that we will use in this paper. From now on, the interval, circle and disk will be denoted as $I, S^1$ and $D$ respectively:
\begin{center}
\begin{equation}
\nonumber
\begin{aligned}
I\: &= [0,1] \subset \mathbb{R} \\
S^1 &= \{(x,y) \in \mathbb{R}^2\ | \ x^2 + y^2 = 1\}\\
D\: &= \{(x,y) \in \mathbb{R}^2\ | \ x^2 + y^2 < 1\}
\end{aligned}
\end{equation}
\end{center}

\subsection{Product Topological Space}
Given two topological spaces $X$ and $Y$, consider the cartesian product: 
$$ X \times Y =\{(x,y)\ |\ x\in X, \ y \in Y\}.$$

The product space is the set $X \times Y$ endowed with the product topology.

\begin{example}
Let $A$ and $B$ be the following two subspaces of $\mathbb{R}^2$ in polar coordinates:
$$A = \{(r,\theta)\ |\ \theta \in [0,2\pi],\ r\ =\ \frac{\pi}{6}+|\sin(\theta+\frac{\pi}{6})^5|\}$$
$$B = \{(r,\theta)\ |\ \theta \in [0,\pi],\ r\ =\  3\cos(\theta)\}$$
\end{example}

\begin{figure}[h]
\includegraphics[width=0.4235\textwidth,keepaspectratio]{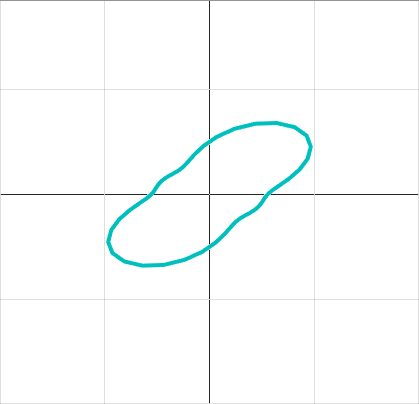}
\includegraphics[width=0.5100\textwidth,keepaspectratio]{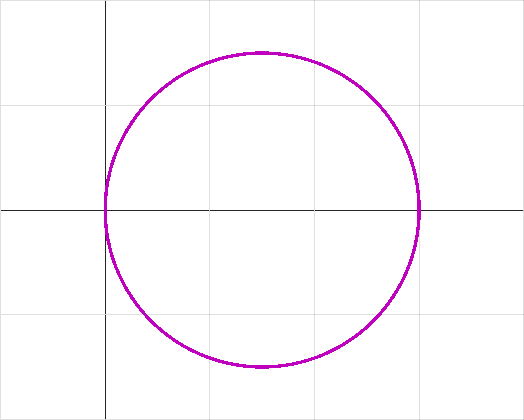}
\caption{Subspaces $A$ and $B$}
\end{figure}

The product space shown in figure \ref{productSpace} is the set $A \times B = \{(x,y) \ |\ x \in A,\ y \in B \}$ endowed with the product topology.

\begin{figure}[h]
\centering
\includegraphics[width=0.41\textwidth,keepaspectratio]{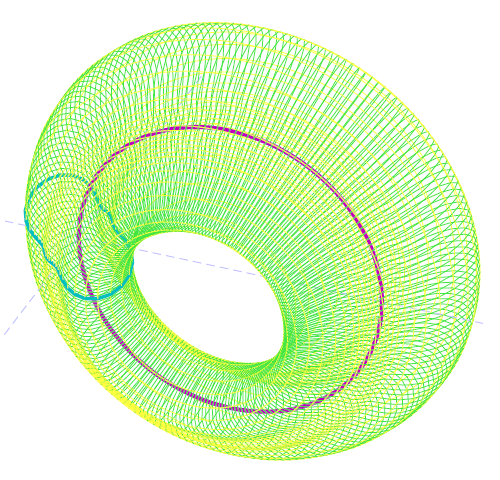}
\caption{Product space $A \times B$} 
\label{productSpace}
\end{figure}

\subsection{Homeomorphism and Homotopy}
A continuous function $f:X \to Y$ is a \emph{homeomorphism} if and only if $f^{-1}$ is also continuous and $f$ is a bijection.
 
Let $f$ and $g$ be two continuous maps between spaces $X$ and $Y$. Let $H:X \times I \to Y$ be a continuous map such that for each $x \in X$, $H(x,0) = f(x)$ and $H(x,1) =g(x)$, then $f$ and $g$ are \emph{homotopic} and $H$ is a \emph{homotopy} between $f$ and $g$. We denote $f \simeq g$.
Two spaces $X$ and $Y$ have the same \emph{type of homotopy} if there exist two functions $f:X \to Y$ and $g:Y \to X$ such that $f \circ g \simeq id_Y$ and $g \circ f \simeq id_X$.

Informally, we can say that one space has the same homotopy type as another if we can \emph{deform} one space into the other by compression, stretching and without \emph{tearing} the space nor \emph{gluing} any two parts of it. For instance, the space $A \times B$ in figure \ref{productSpace} has the same homotopy type as the \emph{torus} $T=S^1 \times S^1$ in figure \ref{Torus}. 

\begin{figure}[h]
\centering
\includegraphics[width=0.41\textwidth,keepaspectratio]{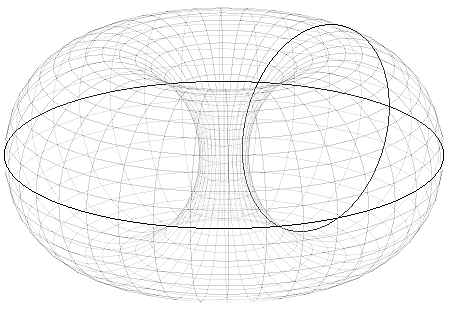}
\caption{Torus $T = S^1 \times S^1$}
\label{Torus}
\end{figure}

If two spaces are homeomorphic then they have the same type of homotopy.  The homotopy however does not imply homeomorphism.

A topological space $X$ is \emph{contractible} if $X$ has the same type of homotopy as a point. In other words, there exists a \emph{deformation} as described before from the space $X$ into a point. For instance, a disk $D$ is contractible whereas a circle $S^1$ is not.

\section{Motion Planning Algorithms}
In order to study the problem of planning the movement of robots in a certain space, we need to introduce certain notions and an associated space to aid with the statement as well as the solution of the problem.

\subsection{Physical and Configuration Space} 
The \emph{physical space} $\Gamma$ is the track where the robots move. The combined positions of all robots at any given moment defines a \emph{state} of the system. The space of all possible states is the \emph{configuration space}. Each state in the configuration space is a point that represents a unique configuration of the robots' locations in the physical space. The physical space is a special case of a configuration space where there is only one robot.
 
The \emph{configuration space} $X$ of $n$ robots moving on the physical space $\Gamma$ without collisions is:
$$X = C^n(\Gamma) = \underbrace{\Gamma \times \Gamma \times \ldots \times \Gamma}_{n \text{ times}}\ -\ \Delta_n = \Gamma^n - \Delta_n$$ where $\Delta_n$ is the \emph{diagonal}. The diagonal represents the set of pairs where the robots are co-located and is defined as:
 
$$\Delta_n = \{(x_1,x_2,\ldots, x_n) \in \Gamma^n\ |\ \exists j\neq i, \  x_i = x_j\}.$$
For all spaces $\Gamma$, we have that $C^1(\Gamma)=\Gamma$.
\subsection{Motion Planning} 
A path $\alpha$ in $X$ is a continuous function $\alpha:I\to X$, where $I$ is the interval [0,1]. The \emph{path space} $PX$ is the set of all paths in the space $X$,

$$PX = \{\alpha\ |\ \alpha\ \mbox{is a path in $X$}\}.$$

The evaluation function $ev$ is a map that takes in a path $\alpha$ in $X$ and returns the \emph{initial} and \emph{final} points of the path.

$$ev: PX \to X \times X$$
$$\alpha \mapsto (\alpha(0), \alpha(1))$$

A \emph{section} $s$ of the evaluation function is a function that takes in a pair of points in the space $X$ and gives out a \text{path} between them. That is, 
 
$$s\ : X\times X \to PX$$
$$(a,b) \mapsto \alpha_{a,b}$$
where $\alpha_{a,b}$ is an \emph{instruction} to move the robots from the starting point $a$ to the ending point $b$ following that specific path. A \emph{Motion Planning Algorithm} (MPA) is such a section.

\subsection{Continuity of Motion Planning}
Motion planning algorithms may or may not be \emph{continuous}. In a continuous motion planning algorithm, the small changes on the path are continuously dependent on the small changes that occur at the initial or final positions. In other words, the instruction to go from $a$ to $b$ will depend \emph{continuously} on the points $a$ and $b$.
 
The continuity of the MPA implies the stability of the robot behavior. That is, the \emph{errors} due to the imprecisions or uncertainties on the initial and final positions would still result in a \emph{nearby} path. 

We are interested in motion planning algorithms that assign instructions in a \emph{continuous way}. Unfortunately, we'll see later that these are very rare.

Let us consider for instance, one robot moving on a circle, and we will try to construct a section that would define a MPA. This robot can move between its initial and final position in different ways: clockwise, counter clockwise, following the shortest path, following the longest path, etc. 
 
Set the instruction to be ``follow the shortest path'', and the section would be defined as:

$$s:S^1\times S^1 \to PS^1$$
$$(a,b) \mapsto \alpha_{shortest}$$

We observe that this section is continuous in all pairs of positions except at the antipodal pairs, and so, this section is not continuous. 

Figure \ref{DiscontinueSh} shows the initial (filled triangle icon) and final position (empty triangle icon) of a robot. We see that a small variation in the final position results in a huge variation in the output path.

\begin{figure}[h] 
\centering
\includegraphics[width=0.8\textwidth,keepaspectratio]{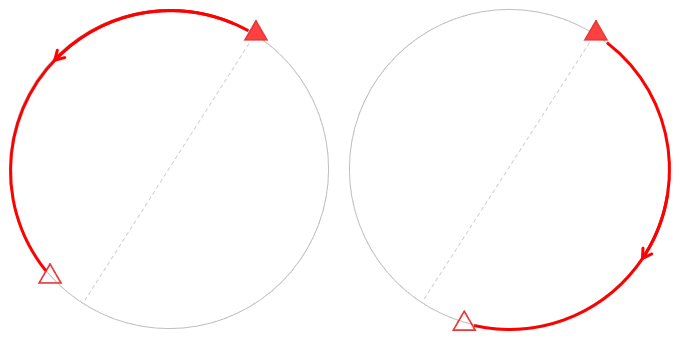}
\caption{Discontinuity at the antipodal pairs}
\label{DiscontinueSh}
\end{figure}

Now, let us try a different instruction, ``go counterclockwise'' for instance. Here also, there exists a state where the section is not continuous; that is the state $(a,a)$ for any $a$ in $S^1$. In other words where the initial and final positions coincide, the section presents a discontinuity (see figure \ref{DiscontinueCC}).

\begin{figure}[h] 
\centering
\includegraphics[width=0.8\textwidth,keepaspectratio]{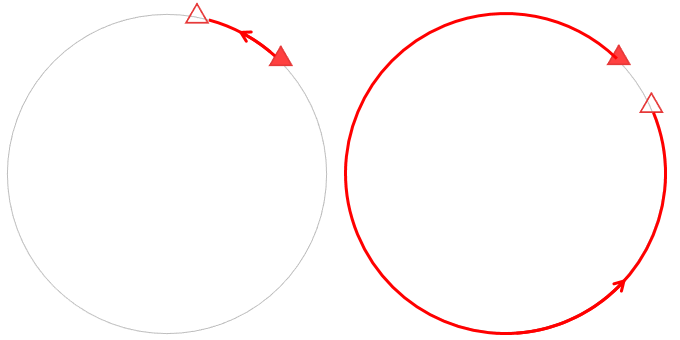}
 \caption{Discontinuity at configuration $(a,a)$}
\label{DiscontinueCC}
\end{figure}

For both instructions, the section presented discontinuities. In fact, we will see in the next that section that it is impossible to find a continuous section for this particular example.

\section{Topological Complexity}
Farber proved that a continuous MPA on a configuration space exists only if the space is contractible.

\begin{theorem}\cite{Farber2003} A globally defined continuous motion planning algorithm in $X$ exists if and only if the configuration space $X$ is contractible.
\end{theorem}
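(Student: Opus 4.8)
The statement is an equivalence, so the plan is to prove both implications, recalling that an MPA is precisely a continuous section $s\colon X\times X\to PX$ of the evaluation map $ev\colon PX\to X\times X$ given by $ev(\alpha)=(\alpha(0),\alpha(1))$.

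For the easier direction ($\Leftarrow$), I would assume $X$ is contractible and construct an explicit section. Contractibility gives a basepoint $x_0\in X$ and a homotopy $H\colon X\times I\to X$ with $H(x,0)=x$ and $H(x,1)=x_0$ for every $x$; thus $t\mapsto H(x,t)$ is a canonical path from $x$ to $x_0$ depending continuously on $x$. Given a pair $(a,b)\in X\times X$, I would define $s(a,b)$ to be the concatenation of the path $t\mapsto H(a,t)$, which runs from $a$ to $x_0$, followed by the reversed path $t\mapsto H(b,1-t)$, which runs from $x_0$ to $b$. The two halves agree at the join, since both equal $x_0$, so after the standard reparametrization onto $[0,1]$ this is a genuine path from $a$ to $b$, and it is a section of $ev$ by construction. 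Continuity of $(a,b)\mapsto s(a,b)$ then follows from continuity of $H$ together with the fact that concatenation of paths is continuous.

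For the converse ($\Rightarrow$), I would start from a globally defined continuous section $s$ and use it to contract $X$. First note that since $s(a,b)$ is a path joining $a$ and $b$ for every pair, $X$ is nonempty and path-connected; fix any $x_0\in X$. I would then define $G\colon X\times I\to X$ by $G(x,t)=s(x,x_0)(t)$, so that $G(x,0)=x$ and $G(x,1)=x_0$. This $G$ is a homotopy from $id_X$ to the constant map at $x_0$, so $X$ has the homotopy type of a point, i.e.\ $X$ is contractible.

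The one genuine point requiring care, and the part I expect to be the main obstacle, is the interplay between the topology on $PX$ and the joint continuity of evaluation. In the forward direction I rely on continuity of concatenation, and in the converse on the joint continuity of the map $PX\times I\to X$, $(\alpha,t)\mapsto\alpha(t)$; both hold once $PX$ carries the compact-open topology and $X$ is suitably nice, for instance a metrizable or CW space, as all the configuration spaces considered here will be. I would therefore record these standing hypotheses explicitly and verify the continuity of the candidate maps $s$ and $G$ before concluding.
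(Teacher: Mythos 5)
The paper does not prove this statement at all; it is quoted as a known result from the cited reference \cite{Farber2003}, so there is no in-paper proof to compare against. Your argument is correct and is essentially Farber's original one: the contraction $H$ yields the continuous section $s(a,b)=H(a,\cdot)\ast\overline{H(b,\cdot)}$, and conversely a global section $s$ yields the contraction $G(x,t)=s(x,x_0)(t)$. The only points worth recording explicitly are the ones you already flag --- that $PX$ carries the compact-open topology so that the exponential correspondence and the evaluation $PX\times I\to X$ are continuous --- plus the standing convention (made in the paper's definition of $TC$) that $X$ is nonempty and connected, since an empty or disconnected $X$ would admit no section yet the empty case would otherwise vacuously break the forward implication.
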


Since in the previous example the robot was moving on a circle $S^1$ which is also the configuration space in this case, and since a circle $S^1$ is not contractible, we have that there exists no continuous MPA for one robot moving on a circle.

We can however decompose the cartesian product of the configuration space into subdomains, with each having a continuous instruction.

 \begin{defn}\cite{Farber2003} The \emph{topological complexity} $TC(X)$ of a connected  space $X$ is defined as the \emph{minimum} integer  $k$ such that the cartesian product $X \times X$ can be covered by $k$ open subsets $U_1, U_2, ..., U_k$, such that for any $i=1,2,...,k$ there exists a \emph{continuous} section $s_i: U_i \to PX$ with $ev \circ s_i = i: U_i\hookrightarrow PX$.
\end{defn}

The sets $U_1, U_2, ..., U_k$ will be the called the \emph{domains of continuity}. If no such number $k$ exists, the $TC(X)$ will be set to be $\infty$. We observe that if the space $X$ is contractible, then $TC(X) = 1$.

\begin{example}
One robot moving on an interval $I$. In this case, we have that the configuration space is 
$X=C^1(I)=I$ and 
$TC(I) = 1$ with domain of continuity  $U =I \times I$.
Since $I$ is contractible, there exists a continuous MPA where the instruction would be for example ``go in straight line towards the final position''.
\end{example}

\begin{figure}[h]
\centering
\includegraphics[width=0.4\textwidth,keepaspectratio]{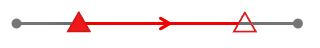}
\caption{One robot moving on I}
\end{figure}

\begin{example}\label{algoS1}
One robot moving on a circle $S^1$. Since there is just one robot, the configuration space is 
$X=C^1(S^1) = S^1$.
\end{example}

Since $S^1$ is not contractible, $TC(S^1)>1$. We have seen previously two examples of MPAs for one robot on $S^1$ where both had discontinuities. Now we can combine the two subsets where each instruction is continuous so that the union would cover the cartesian product $S^1 \times S^1$. 

Then, the domains of continuity will be:

$$U = \{(a,b) \in S^1 \times S^1\ |\ a\ \mbox{is not antipodal to}\ b\}\:\:\mbox{and}$$ 
$$V = \{(a,b) \in S^1 \times S^1\ |\ a\ \neq b\}.$$

Therefore, $TC(X) = 2$ since we exhibit an explicit covering by domains of continuity with exactly two sets.

From the MPA standpoint, this decomposition into $k$ subspaces can be interpreted as having a set of $k$ continuous instructions, where each operates in its corresponding domain of continuity. 
For this example, the resulting two instructions in the MPA would be as follows: If the initial and final positions are not antipodal then go following the shortest path. Otherwise, go counterclockwise.

\begin{figure}[h]
\centering
\includegraphics[width=0.75\textwidth,keepaspectratio]{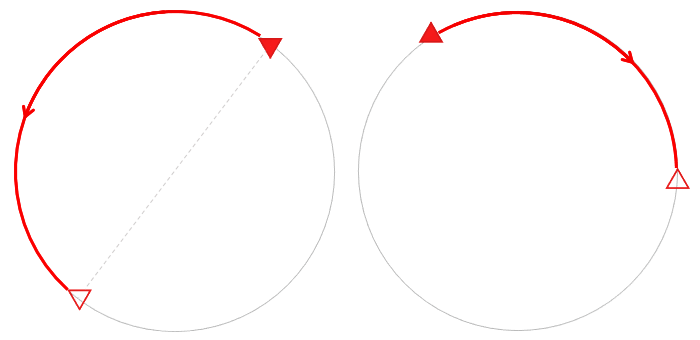}
\caption{Two Instructions for one robot moving on $S^1$}
\end{figure}

Farber also proved that the topological complexity $TC(X)$ of a space $X$ is \emph{invariant} under homotopy.
\begin{theorem}\cite{Farber2003}\label{homotopyInv} If $X \simeq Y$ then $TC(X) = TC(Y)$.
\end{theorem}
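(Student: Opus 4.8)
The plan is to establish the two inequalities $TC(Y) \le TC(X)$ and $TC(X) \le TC(Y)$ separately; by symmetry it suffices to prove one of them, since the hypothesis $X \simeq Y$ is symmetric in the two spaces. Let $f : X \to Y$ and $g : Y \to X$ be continuous maps with $f \circ g \simeq id_Y$ and $g \circ f \simeq id_X$, and fix a homotopy $H : Y \times I \to Y$ with $H(y,0) = f(g(y))$ and $H(y,1) = y$ for every $y \in Y$. Suppose $TC(X) = k$, witnessed by an open cover $U_1, \dots, U_k$ of $X \times X$ together with continuous sections $s_i : U_i \to PX$ of $ev$.

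First I would transport the cover to $Y \times Y$. Since $g \times g : Y \times Y \to X \times X$ is continuous, the sets $V_i = (g \times g)^{-1}(U_i)$ are open and cover $Y \times Y$. The task is then to produce, on each $V_i$, a continuous section $\sigma_i : V_i \to PY$ of the evaluation map on $Y$, which would exhibit $TC(Y) \le k$.

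The central construction is to correct endpoints using the homotopy $H$. Given $(a,b) \in V_i$, the path $s_i(g(a), g(b))$ runs from $g(a)$ to $g(b)$ in $X$, so $f \circ s_i(g(a),g(b))$ runs from $f(g(a))$ to $f(g(b))$ in $Y$. This is not yet a path from $a$ to $b$, but the slices $t \mapsto H(a,t)$ and $t \mapsto H(b,t)$ supply paths from $f(g(a))$ to $a$ and from $f(g(b))$ to $b$ respectively. I would therefore define $\sigma_i(a,b)$ to be the concatenation of the reverse of the slice at $a$ (running $a \to f(g(a))$), the image path $f \circ s_i(g(a),g(b))$ (running $f(g(a)) \to f(g(b))$), and the slice at $b$ (running $f(g(b)) \to b$). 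This is a well-defined path from $a$ to $b$, so $ev \circ \sigma_i$ is the inclusion $V_i \hookrightarrow Y \times Y$, as required of a section.

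The step I expect to be the main obstacle is verifying that each $\sigma_i$ is continuous as a map into $PY$ with the compact-open topology. Each ingredient — the maps $g$, $s_i$, $f$, the homotopy $H$, path reversal, and the assignment of the endpoint-correcting slices — is continuous, but one must still check that path concatenation is jointly continuous in its matching-endpoint arguments in order to conclude that $\sigma_i$ is continuous. Granting this, the sets $V_1, \dots, V_k$ together with the sections $\sigma_1, \dots, \sigma_k$ give $TC(Y) \le k = TC(X)$. Interchanging the roles of $X$ and $Y$ (pulling the cover back along $f \times f$, pushing paths forward along $g$, and using a homotopy $g \circ f \simeq id_X$ to correct endpoints) yields $TC(X) \le TC(Y)$, and hence $TC(X) = TC(Y)$.
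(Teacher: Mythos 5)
Your proposal is correct: it is precisely the standard argument from the cited reference (Farber 2003) — pull back the cover along $g\times g$, push the paths forward along $f$, and splice on the tracks of the homotopy $f\circ g\simeq id_Y$ to fix the endpoints — and the continuity of the resulting sections follows from the exponential law and the continuity of concatenation, as you anticipate. The paper itself does not reproduce a proof of this theorem, only the citation, so there is no in-paper argument to compare against; your write-up fills that gap faithfully.
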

 
Computing the topological complexity of a configuration space is an essential step to derive the motion planning algorithm in the corresponding physical space since it provides the minimum number of instructions of the MPA. The examples seen so far have relatively simple configuration spaces, but as we increase the number of robots and the physical space gets more complex, so does the configuration space. Finding the discontinuities directly while in the configuration space is not trivial, and the above theorem is crucial to reduce the overall complexity by using the homotopy not only to calculate the topological complexity of a simpler space, but also to construct the actual algorithm.

%%%%%%%%%%%%%%%%%%%%%%%%%%%%%%%%
\section{Two robots moving on a graph}
We consider the case of two \emph{distinct} robots $A$ and $B$ moving on a graph $\Gamma$. The problem to solve is finding a \emph{continuous} and \emph{collision-free} motion planning algorithm with the \emph{least} number of instructions needed to move both robots A and B from their initial positions to their final positions.

\subsection{Two robots moving on an interval $I$}
Let $A$ and $B$ be two robots moving on $\Gamma = I$. 

\begin{figure}[h]
\centering
\includegraphics[width=0.33\textwidth,keepaspectratio]{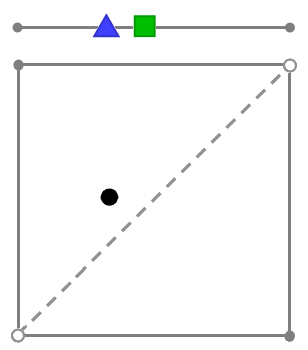}
\caption{Physical and configuration spaces for two robots in $I$}
\label{2RobotsOnI}
\end{figure}

The configuration space in this case is $X=C^2(I) = I \times I - \Delta$. The physical space and the configuration space are shown in figure \ref{2RobotsOnI}. Each pair of positions of the two robots in the physical space defines a state in the configuration space. Figure \ref{2RobotsOnI} shows a concrete pair of positions in the physical space and its corresponding state in the configuration space.

\begin{figure}[h]
\centering
\includegraphics[width=0.33\textwidth,keepaspectratio]{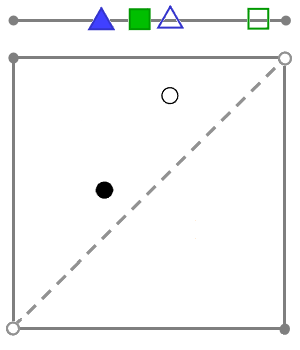}
\caption{Initial and final states are in the same connected component.}
\label{2RobotsOnI_a}
\end{figure}

We observe that the configuration space in this case is \emph{not connected} because of the removal of the diagonal. Even though there are some states where there is a path between the initial and final positions of the two robots (see figure \ref{2RobotsOnI_a}), the problem of finding a path between any initial and final configuration has no solution (figure \ref{2RobotsOnI_b}). In other words, there is no MPA for this case.

This impossibility to connect two states in the configuration space by a path has its counterpart in the physical space: two robots cannot swap positions within the interval.

\begin{figure}[H]
\centering
\includegraphics[width=0.33\textwidth,keepaspectratio]{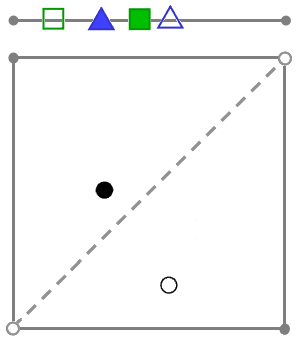}
\caption{Initial and final states are in different connected components.}
\label{2RobotsOnI_b}
\end{figure}

\subsection{Two robots moving on a circle $S^1$}
Now, let's consider two robots $A$ and $B$ moving on $\Gamma = S^1$. The configuration space in this case is given by $X=C^2(S^1) = S^1 \times S^1 - \Delta \ =\ T - \Delta.$ This space is homeomorphic to a cylinder $X=C^2(S^1) = S^1 \times I$.

\begin{figure}[h]
\centering
\includegraphics[width=0.75\textwidth,keepaspectratio]{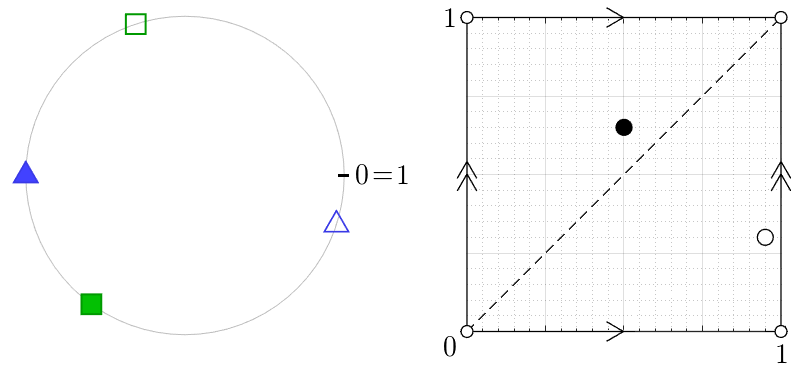}
\caption{Physical and configuration spaces for two robots on $S^1$}
\label{2RobotsOnS1}
\end{figure}	
%\todo{arrows centered in each edge in the drawing}
A very useful way to represent this space would be using the \emph{flat torus} representation as shown in figure \ref{2RobotsOnS1}. We also shown in this figure the initial and final positions of two robots in the physical space and their corresponding states in the configuration space.

By working in the configuration space, the complexity of the problem is reduced from trying to find two paths to move two distinct robots from their initial positions to their respective final positions into finding one \emph{combined} path to move the initial state of the two robots to the final state in the configuration space.

\subsubsection{Topological Complexity of the Configuration Space} 
As we have seen before, the configuration space is $X=C^2(S^1)=S^1 \times I$. Since the configuration space is homotopy equivalent to a circle, we have that $TC(X) = TC(S^1 \times I) = TC(S^1) = 2$ by theorem \ref{homotopyInv}.
 
This gives us the minimum number of instructions needed for a continuous MPA for two robots moving on a circle $S^1$. Knowing how to transition back and forth between the physical space and the configuration space, our objective is first to find two instructions that define the algorithm to move from the initial state to the final state in the the configuration space, and then translate them back to the physical space. 

We will take advantage of the construction of an explicit homotopy between the configuration space and the circle to build our algorithm based on the known algorithm for the circle given in the example \ref{algoS1}.

\subsubsection{MPA for two robots moving on $S^1$}
Recall that the configuration space $X$ is path-connected and is homotopy equivalent to a circle. For the purpose of the homotopy, we will choose a special circle to deform the configuration space into: the \emph{antipodal circle}.

\begin{defn}
The \emph{antipodal circle} $\A\subset C^2(S^1)$ is the set of pairs of points on the circle such that they are antipodal to each other. We write:
$$\A = \{(a,b) \in S^1 \times S^1\:|\:a\:\mbox{is antipodal to}\: b\}$$
\end{defn}
%\todo{mark here the increasing direction on $\A$ with an arrow}
\begin{figure}[h]
\centering
\includegraphics[width=1\textwidth,keepaspectratio]{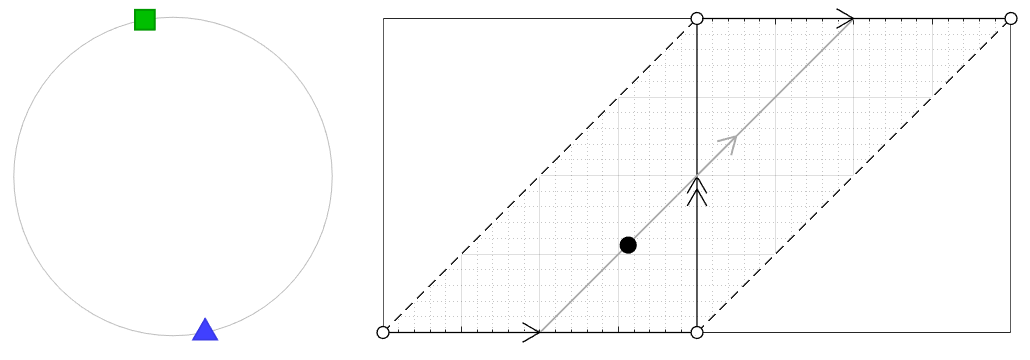}
\caption{The antipodal circle $\A$ and its counterclockwise orientation}
\label{AP}
\end{figure}	

The antipodal circle lays halfway of the diagonal $\Delta = \{(a,b)\in S^1 \times S^1\ |\ a=b\}$, and thus, it splits the configuration space into two equal and symmetric subspaces. In the flat torus representation (figure \ref{AP}), the antipodal circle $\A$ will be the set of points in $X$ lying on the line $b=a-\frac{1}{2}$.

Consider the homotopy $H:X \times I \to X$ given by the projection into $\A$ following horizontal lines.
Recall that each state $x=(A,B)$ represents the collective positions of robots $A$ and $B$ in the circle.

Let ${x_i}$ and $x_f$ be the initial and final states of the robots. We call the \emph{initial antipodal} state $x_i^\prime\:=\:H_1(x_i)$ the image of the initial state in the antipodal circle $\A$ by the homotopy. Likewise, we call the \emph{final antipodal} state $x_f^\prime\:=\:H_1(x_f)$ the projection per homotopy of the final state in  $\A$ (see figure \ref{AlgoInX}). 

\begin{figure}[H]
\centering
\begin{overpic}
[width=0.8\textwidth,keepaspectratio]{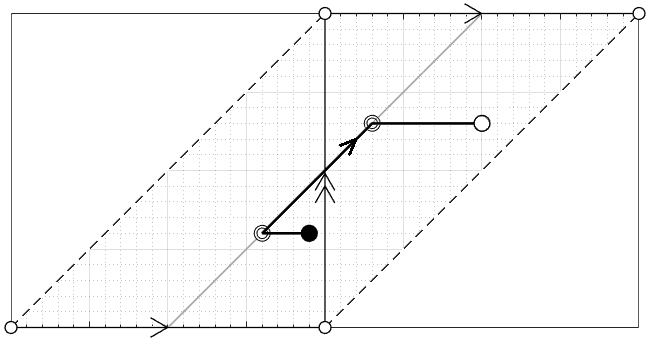}
\put(73,36){$x_f$}
\put(53.0,36){$x_f^\prime$}
\put(35,18){$x_i^\prime$}
\put(44,12){$x_i$}
\end{overpic}
\caption{Initial and final antipodal states}
\label{AlgoInX}
\end{figure}

The idea of our algorithm will be to move the initial and final states $x_i$ and $x_f$ to their antipodal states $x_i^\prime$ and $x_f^\prime$ respectively, and then apply in the circle $\A$ the known algorithm (example \ref{algoS1}) to move from $x_i^\prime$ to $x_f^\prime$ (see figure \ref{AlgoInX2}). 

\begin{figure}[H]
\centering
\begin{overpic}
[width=0.6\textwidth,keepaspectratio]{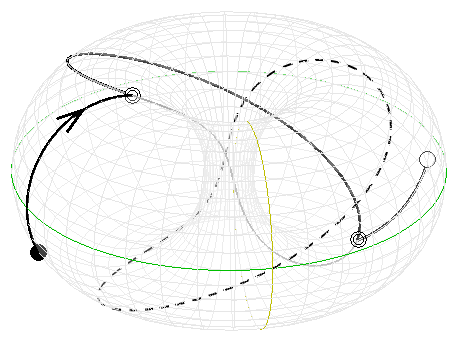}
\put(89.2,43){$x_f$}
\put(76,15){$x_f^\prime$}
\put(27,46.5){$x_i^\prime$}
\put(3.25,13.5){$x_i$}
\end{overpic}
\caption{The steps of the MPA in $X$}
\label{AlgoInX2}
\end{figure}	

We will also define certain distinguished positions in the configuration space that will correspond to the action of interchaging robots in the physical space.

\begin{defn}
Two states $x_i=(A_i,B_i)$ and $x_f=(A_f,B_f)$ are {\em swapped states} if $A_i=B_f$ and $A_f=B_i$.
\label{swapedDefinition}
\end{defn}
%\todo[inline]{do we use $A_1$, $A_2$, and $B_1$ and $B_2$, or $x_i$,$x_f$ ..etc.}

\begin{figure}[H]
\centering
\begin{overpic}
[width=0.675\textwidth,keepaspectratio]{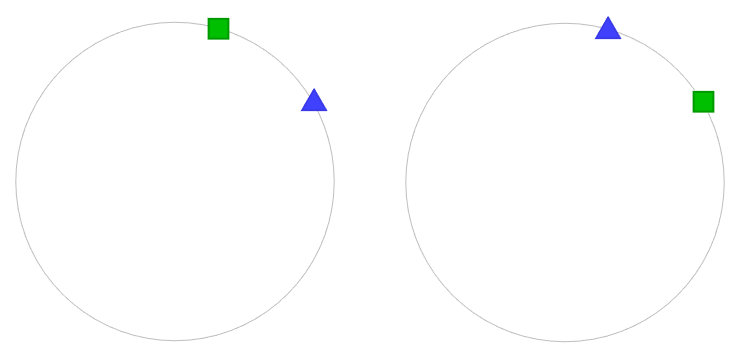}
\put(44,35){$A_i$}
\put(32,44){$B_i$}
\put(84,44){$A_f$}
\put(97,35){$B_f$}
\end{overpic}
\caption{Swapped states in the physical space}
\label{swapped states}
\end{figure}	

Swapped states correspond to points in the configuration space that are symmetric with respect to the diagonal (see figure \ref{swapConf}).

\begin{figure}[H]
\centering
\begin{overpic}
[width=0.675\textwidth,keepaspectratio]{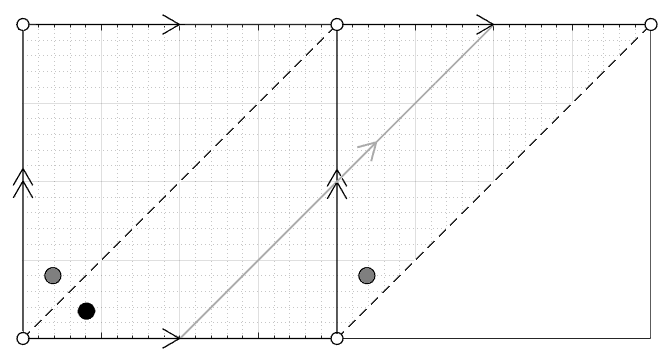}
\put(15.5,8.5){$x_i$}
\put(7.5,15.5){$x_f$}
\put(53.5,15.5){$x_f$}
\end{overpic}
\caption{Swapped states in the configuration space}
\label{swapConf}
\end{figure}
\begin{remark}
Observe that the swapped states that are in the antipodal circle $\A$ correspond exactly to the antipodal points in that circle. 
\end{remark}

The following is the description of our algorithm in the configuration space $X$.

\begin{enumerate}
\item Preliminary step: Move initial state $x_i$  to its antipodal state $x_i^\prime$ on $\A$ following the path  $H_t(x_i)$.
\item Main step: While on $\A$, if the two antipodal states $x_i^\prime$ and $x_f^\prime$ are swapped states, then move the initial antipodal state $x_i^\prime$ counterclockwise towards the final antipodal state $x_f^\prime$. Otherwise, move the initial antipodal state $x_i^\prime$ following the shortest path on $\A$ towards the final antipodal state $x_f^\prime$.

\item Final step: Move the final antipodal state $x_f^\prime$ back to the final state $x_f$ following the reverse path to $H_t(x_f)$.
\end{enumerate}

When transitioning back to the physical space, we obtain the following MPA to move two robots $A$ and $B$ in a circle:

\begin{enumerate}
\item Preliminary step: 
Move robot $A$ away from robot $B$ until robot $A$ reaches the antipodal position of the robot $B$. Robot $B$ remains stationary.\\

\item Main step:
If the final position of the robot $B$ is antipodal to its initial position, then move both robots in counterclockwise direction until robot $B$ reaches its final destination. Otherwise, move both robots in the same direction following the shortest path for robot $B$ to its final position, until robot $B$ reaches its final position.\\
 
\item Final step:
Move robot $A$ following the shortest path to its final position, until it reaches its final position. Robot $B$ remains stationary.
\end{enumerate}

Note that the preliminary and final steps are common steps in both instructions, the main step is where we deal with the discontinuity of the MPA.

The domains of continuity will be determined by the regions described before.
Let $V_1=\{((a,b),(a',b'))\in X\times X |\; b \mbox{ is antipodal to } b'\}$ and $V_2$ the set of all other pairs of points in the configuration space. Consider $U_1$ a small neighborhood of $V_1$ and $U_2=V_2$. Then $\{U_1, U_2\}$ is a covering of $X\times X $ by domains of continuity.

\subsection{Running the proposed MPAs}
\begin{example} Initial and final positions of the second robot are not antipodals.
 
\begin{figure}[H]
\centering
\includegraphics[width=1\textwidth,keepaspectratio]{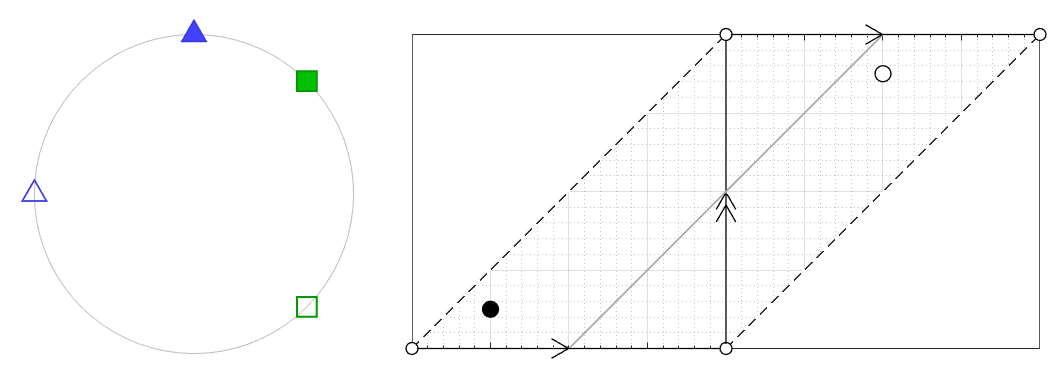}
\caption{Initial setup}
\end{figure}	
 
\begin{figure}[H]
\centering
\includegraphics[width=1\textwidth,keepaspectratio]{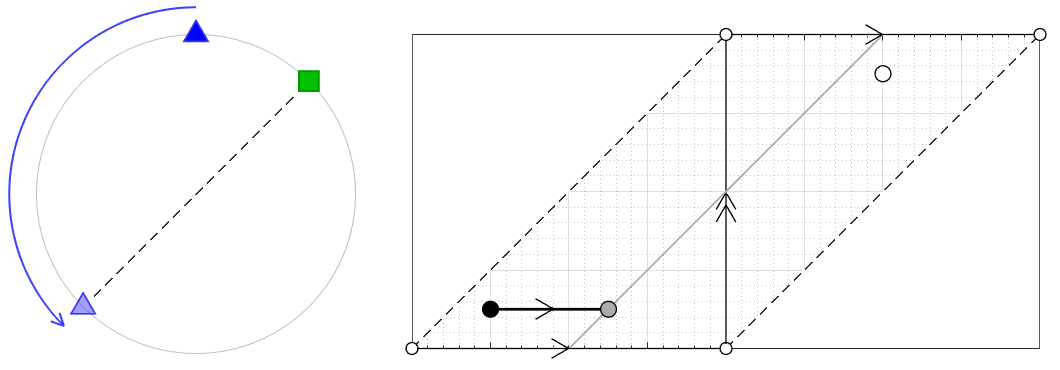} 
\caption{Preliminary step}
\end{figure}	
 
\begin{figure}[H]
\centering
\includegraphics[width=1\textwidth,keepaspectratio]{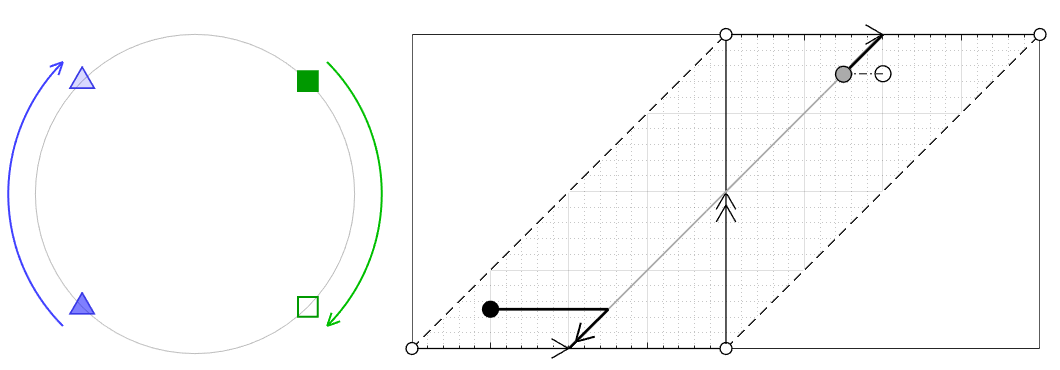}
\caption{Main step: "Move following the shortest path"}
\end{figure}	
 
\begin{figure}[H]
\centering
\includegraphics[width=1\textwidth,keepaspectratio]{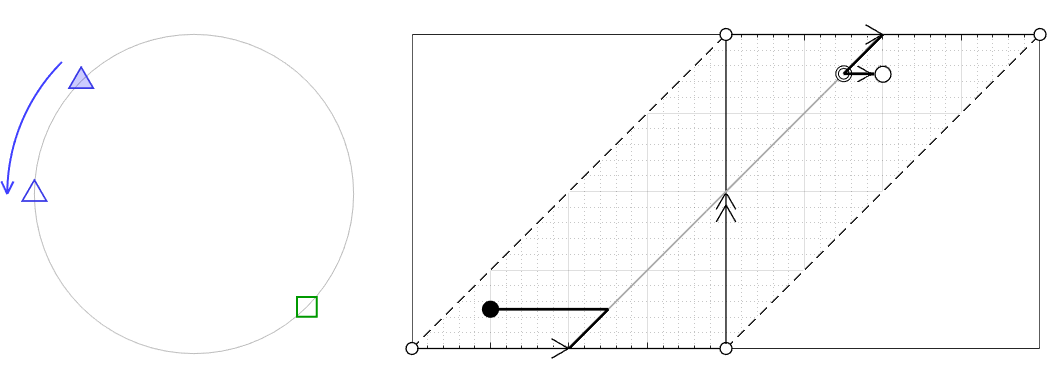}
\caption{Final step}
\end{figure}	
\end{example}

\begin{example} Initial and final positions of the second robot are antipodals.
 
\begin{figure}[H]
\centering
\includegraphics[width=1\textwidth,keepaspectratio]{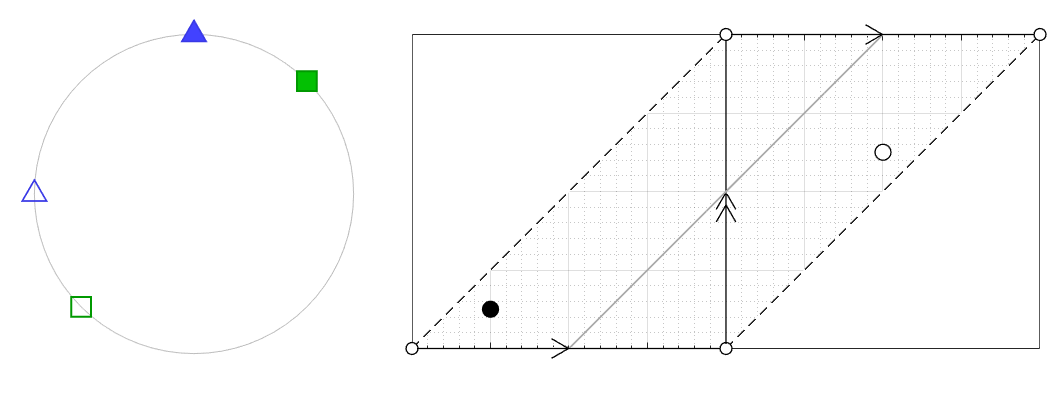}
\caption{Initial setup}
\end{figure}	
 
\begin{figure}[H]
\centering
\includegraphics[width=1\textwidth,keepaspectratio]{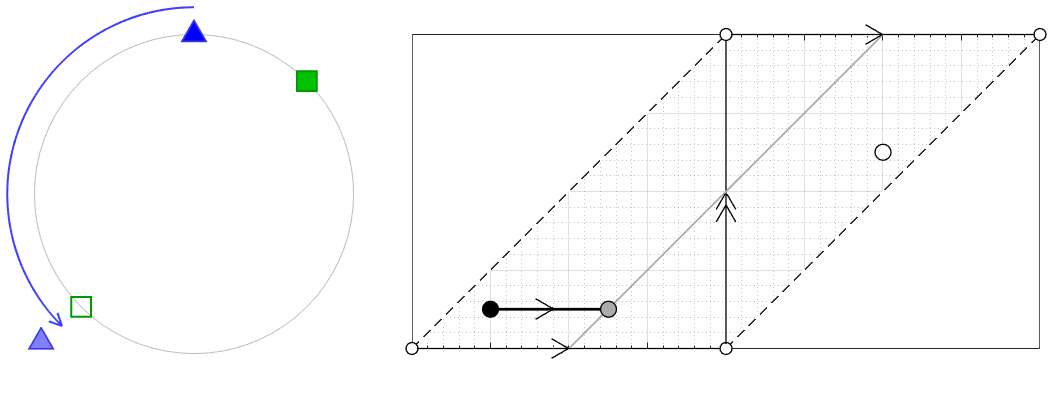} 
\caption{Preliminary step}
\end{figure}	
 
\begin{figure}[H]
\centering
\includegraphics[width=1\textwidth,keepaspectratio]{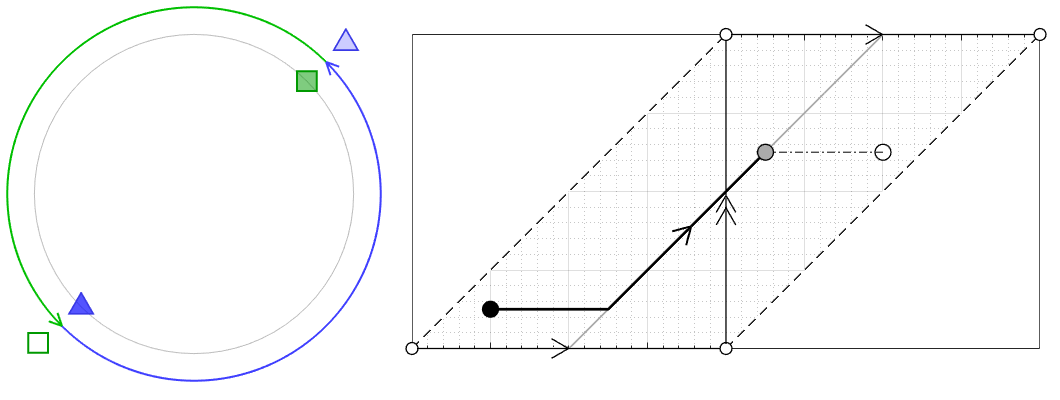}
\caption{Main step: "Move both robots in counterclockwise direction"}
\end{figure}	
 
\begin{figure}[H]
\centering
\includegraphics[width=1\textwidth,keepaspectratio]{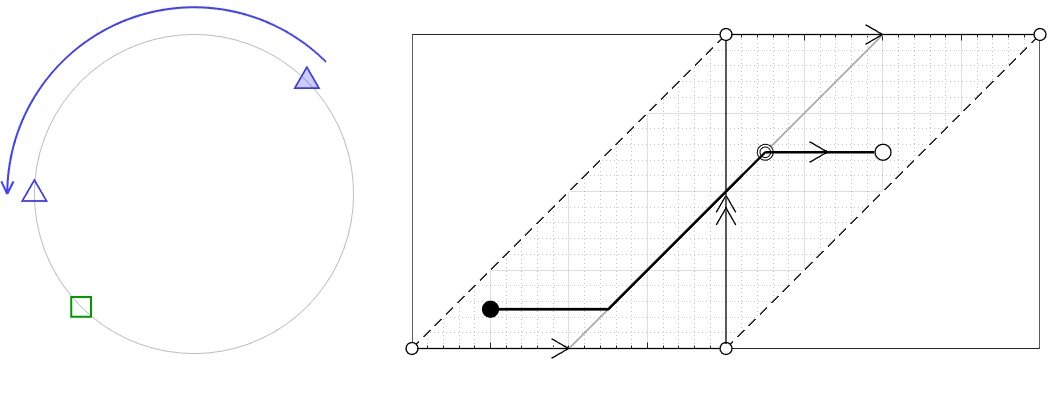}
\caption{Final step}
\end{figure}

\end{example}

In the next section, we'll expand this construction of the algorithm to derive a MPA for two robots moving on a lollipop-shape track.

%%%%%%%%%%%%%%%%%%%%%%%%%%%%%%%%
\section{Main example: Lollipop graph $\Gamma=L$}
We focus our efforts now on the construction of a MPA for the case of two robots moving on a track consisting of a circle with an interval attached. 

\begin{figure}[H]
\centering
\begin{overpic}
[width=0.5\textwidth,keepaspectratio]{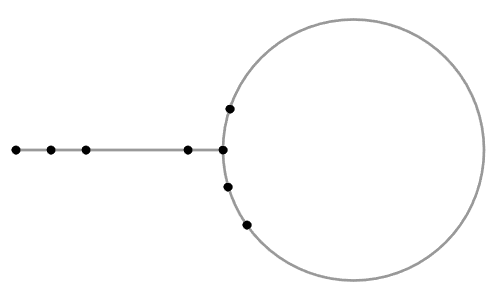}
\put(1.8,32){$1$}
\put(8.8,32){$2$}
\put(15.8,32){$3$}
\put(30,32){$n-g$}
\put(47,28.5){$1$}
\put(49,21){$2$}
\put(53,14){$3$}
\put(49,37){$g$}
\end{overpic}
\caption{Generalized lollipop graph $C_{n,g}$}
\label{GeneralL}
\end{figure}

We consider the class $G_{n,g}$ of connected graphs on $n$ vertices with fixed girth $g$ of which the \emph{generalized lollipop graphs}, $C_{n,g}$, will be the ones obtained by appending a single $g$ cycle to a pendant vertex of a path on $n-g$ vertices \cite{fallat2002}. See figure \ref{GeneralL}.

\begin{figure}[H]
\centering
\begin{overpic}
[width=0.5\textwidth,keepaspectratio]{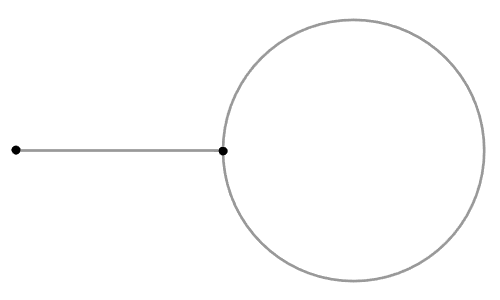}
\put(1.8,32){$1$}
\put(41,31.5){$2$}
\end{overpic}
\caption{Lollipop graph $L=C_{2,1}$}
\label{LollipopGraph}
\end{figure}

\begin{defn}
A \emph{lollipop} graph $L$ will be a generalized lollipop graph with two vertices and girth 1, $L = C_{2,1}$. See figure \ref{LollipopGraph}.
\end{defn}

\subsection{Configuration space}
The configuration space $X$ is given by: 
$$X=C^2(L) = L \times L - \Delta$$ 
where $L$ is a lollipop graph.

Figure \ref{configII} depicts $X$ and its flat representation. Recall that in the configuration space, each state represents the combined positions of the two robots in $L$, where the $x$-axis represents the positions of the first robot $A$, and the $y$-axis represents the positions of the second robot $B$ as in figure \ref{flatII}.

\begin{figure}[h]
\centering
\includegraphics[width=1\textwidth,keepaspectratio]{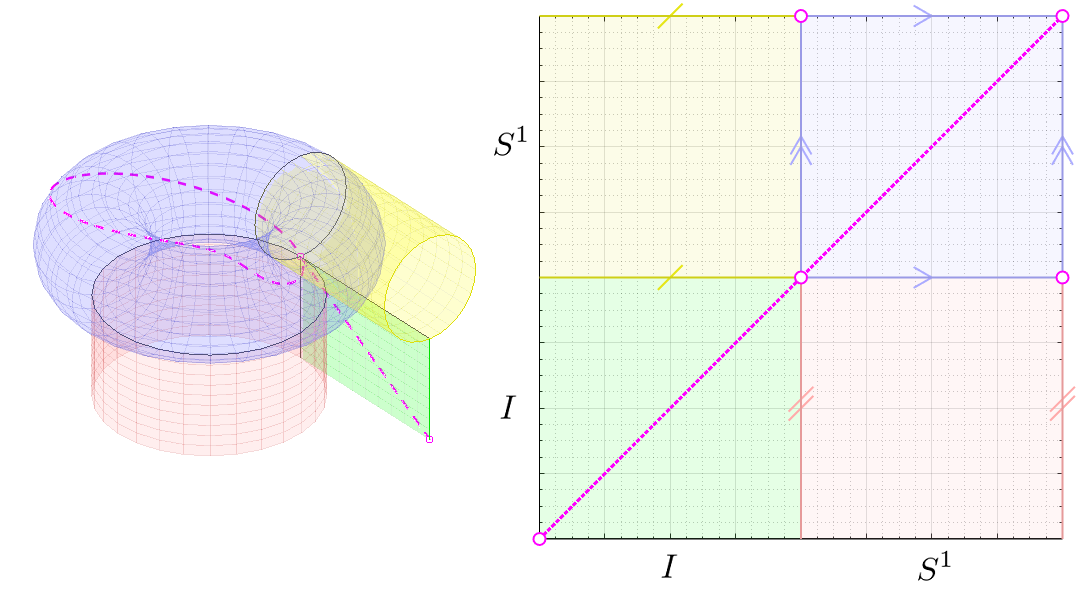}
\caption{Configuration space $X$ and its flat representation}
\label{configII}
\end{figure}

\begin{figure}[h]
\centering
\includegraphics[width=0.88\textwidth,keepaspectratio]{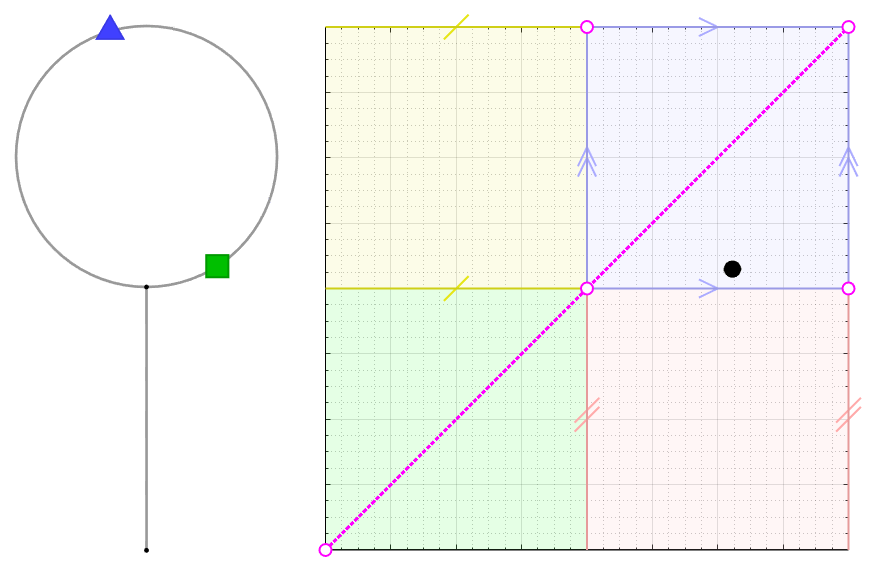}
\caption{Physical positions and their state in configuration space}
\label{flatII}
\end{figure} 

\subsubsection{Homotopy deformation} Ghrist proved in \cite{ghrist2002} that the configuration space of any graph with $k$ vertices of degree greater than two, deformation retracts to a subcomplex of dimension at most $k$ . We have then that our configuration space is homotopy equivalent to a 1-dimensional space. Our motion planning algorithm will be derived while working in the configuration space, as we explained in the previous examples. The execution of the algorithm, however, will run in the physical space. We will chose a 1-dimensional space in the configuration space that has a clear meaning when projected back into the physical space. Following the same approach as before, this 1-dimensional space, that we will call the skeleton $S$, will lay halfway from the diagonal representing the collision line if the two robots are both in the circle or in the interval. This graph will be completed with the addition of other segments as in figure \ref{LollipopFlat}.

\begin{figure}[H]
%\begin{adjustwidth}{-2.00cm}{-2.00cm}
\centering
\includegraphics[width=0.88\textwidth,keepaspectratio]{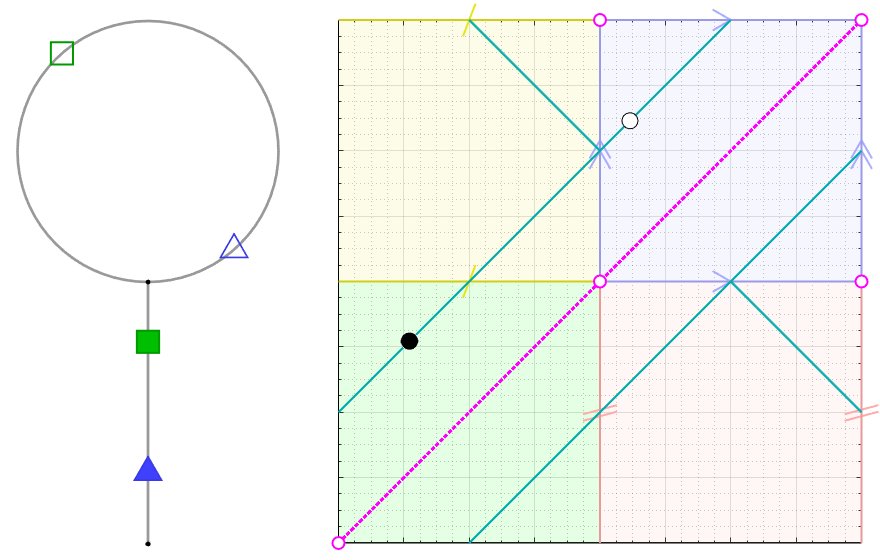}
\caption{Two positions in the skeleton $S$}
\label{LollipopFlat}
%\end{adjustwidth}
\end{figure}

The homotopy $H$ deforming the configuration space $X$ into the skeleton $S$ is shown in figure \ref{homotopyII}. This homotopy will be useful to transit in and out of the skeleton in the configuration space $X$.

\begin{figure}[H]
\centering
\includegraphics[width=0.6\textwidth,keepaspectratio]{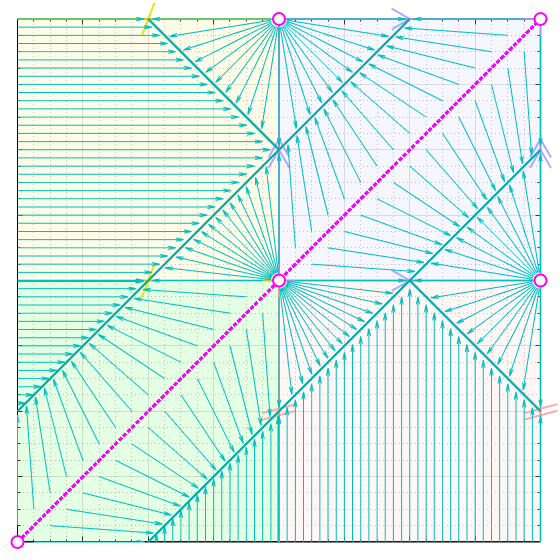}
\caption{Traces of the homotopy deformation $H$}
\label{homotopyII}
\end{figure}

The skeleton $S$ is homeomorphic to a chain of three circles  with two added intervals as shown in figure \ref{skelII}. By contracting the intervals and the lower part of each circle, we can see that the skeleton is homotopy equivalent to a wedge of three circles $S^1 \vee S^1 \vee S^1$ as shown in figure \ref{3wedged}.

\begin{figure}[H]
\centering
\includegraphics[width=0.75\textwidth,keepaspectratio]{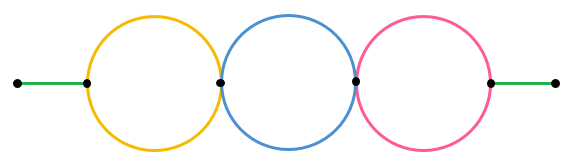}
\caption{The skeleton $S$}
\label{skelII}
\end{figure}

\begin{figure}[h]
\centering
\includegraphics[width=0.33\textwidth,keepaspectratio]{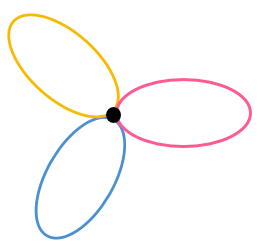}
\caption{Wedge of three circles}
\label{3wedged}
\end{figure}

\subsection{Topological Complexity}
In order to exhibit a set of instructions for the motion planning algorithm, we will consider the number of these continuous instructions given by the topological complexity  of the configuration space, $TC(X)$. Farber calculated the topological complexity of all graphs based on their first Betti number. 

In topological graph theory, the first Betti number of a graph $G$ with $n$ vertices, $m$ edges and $k$ connected components is given by $b_1(G) = m-n+k$.
\begin{prop}\cite{farber2004instabilities}\label{betti}
Let $G$ be a graph, then
\begin{equation}\nonumber
TC(G)=\left\{ 
	\begin{array}{cl}
		1 & \textrm{if }b_1(G) = 0\\
		2 & \textrm{if }b_1(G) = 1\\
		3 & \textrm{if }b_1(G) > 1
	\end{array}\right.
\end{equation}
\end{prop}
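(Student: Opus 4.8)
The plan is to prove the trichotomy in Proposition~\ref{betti} by treating each value of the first Betti number $b_1(G)$ separately, in each case computing $TC(G)$ via the homotopy invariance established in Theorem~\ref{homotopyInv} together with a known classification of the homotopy type of a connected graph. The crucial structural fact I would invoke is that every connected graph $G$ is homotopy equivalent to a wedge of $b_1(G)$ circles, $\bigvee_{i=1}^{b_1(G)} S^1$. This follows by choosing a spanning tree $T$ of $G$: the tree $T$ is contractible, and collapsing $T$ to a point leaves one circle for each of the $m-(n-1)=m-n+1$ edges not in the tree. Since $G$ is connected we have $k=1$, so $b_1(G)=m-n+1$ is exactly the number of these extra edges, hence the number of circles in the wedge. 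By Theorem~\ref{homotopyInv}, it therefore suffices to compute $TC$ of a wedge of $b_1(G)$ circles.

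With that reduction in hand, the three cases proceed as follows. If $b_1(G)=0$, then $G$ is homotopy equivalent to a wedge of zero circles, i.e.\ to a point, so $G$ is contractible; by the remark following the definition of topological complexity, a contractible space has $TC=1$, which matches the known fact that a tree admits a globally continuous MPA. If $b_1(G)=1$, then $G\simeq S^1$, and the computation $TC(S^1)=2$ was carried out explicitly in Example~\ref{algoS1} and the discussion following it, where an optimal covering of $S^1\times S^1$ by two domains of continuity was exhibited; homotopy invariance then gives $TC(G)=2$. For $b_1(G)>1$, the graph is homotopy equivalent to a wedge of at least two circles, and the claim is that $TC\big(\bigvee_r S^1\big)=3$ for every $r\ge 2$.

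The main obstacle is precisely this last case: establishing $TC\big(\bigvee_r S^1\big)=3$ for $r\ge 2$. Unlike the first two cases, this does not reduce to a space already analyzed in the excerpt, and it requires both a lower bound ($TC\ge 3$) and an upper bound ($TC\le 3$). For the upper bound, I would note that a wedge of circles is a one-dimensional CW-complex, and invoke the general dimension-connectivity estimate $TC(Y)\le 2\dim(Y)+1$, which for a connected one-dimensional complex ($\dim=1$) yields $TC\le 3$. The genuinely hard direction is the lower bound $TC\ge 3$, i.e.\ ruling out a covering of $(\bigvee_r S^1)\times(\bigvee_r S^1)$ by only two domains of continuity. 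The standard tool here is the cohomological lower bound via zero-divisor cup-length: one works in the cohomology ring $H^*\big(\bigvee_r S^1\big)$, which has $r\ge 2$ independent one-dimensional generators, and exhibits two zero-divisors in $H^*(Y\times Y)$ whose product is nonzero. Concretely, if $u_1,u_2$ are distinct generators with corresponding zero-divisors $\bar u_i = 1\otimes u_i - u_i\otimes 1$, one checks that $\bar u_1\,\bar u_2\neq 0$ in $H^*(Y\times Y)$, forcing $TC(Y)\ge 3$.

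Since this cohomological machinery is not developed in the excerpt, in practice I would cite the cited source \cite{farber2004instabilities} for the general graph computation rather than reprove the lower bound from scratch; the proof I would present in the paper is thus the reduction to wedges of circles via spanning trees and homotopy invariance, disposing of the $b_1=0$ and $b_1=1$ cases by the self-contained arguments above, and appealing to Farber's result for the uniform value $TC=3$ once $b_1\ge 2$. The one point requiring care throughout is the hypothesis of connectedness: it is what guarantees $k=1$ so that the Betti number formula $b_1=m-n+1$ matches the number of independent cycles, and it is built into the definition of $TC$ for a connected space used in the excerpt.
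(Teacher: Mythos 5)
The paper does not actually prove this proposition --- it is imported verbatim as a cited result from \cite{farber2004instabilities}, so there is no in-paper proof to compare against. Your reconstruction is correct and is essentially the standard (and indeed Farber's own) argument: collapse a spanning tree to reduce to a wedge of $b_1(G)$ circles, invoke homotopy invariance of $TC$, dispose of $b_1=0$ by contractibility and $b_1=1$ by the explicit two-set covering of $S^1\times S^1$, and for $b_1\ge 2$ combine the dimension upper bound $TC\le 2\dim+1=3$ with the zero-divisor cup-length lower bound. Your key computation checks out: with $u_1,u_2$ independent degree-one classes and $u_1u_2=0$ in a wedge of circles, one gets $\bar u_1\bar u_2 = u_2\otimes u_1 - u_1\otimes u_2 \neq 0$, forcing $TC\ge 3$; and your care about connectedness (so that $k=1$ and $b_1=m-n+1$ counts the independent cycles, matching the hypothesis under which $TC$ is defined in this paper) is warranted, since the proposition as stated says only ``graph.'' The only non-self-contained ingredients are the dimension bound and the cup-length criterion, which you correctly flag as needing either a citation or separate development.
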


\begin{theorem}
The topological complexity of the configuration space of two robots moving on a lollipop graph is three, $TC(C^2(L))=3.$
\end{theorem}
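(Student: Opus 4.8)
The plan is to reduce the computation to Farber's classification of the topological complexity of graphs (Proposition \ref{betti}) by exploiting the homotopy invariance of $TC$ (Theorem \ref{homotopyInv}). The key observation is that although the configuration space $X = C^2(L)$ is two-dimensional, it is homotopy equivalent to a one-dimensional complex, namely the skeleton $S$ constructed above, and the homotopy type of $S$ is easy to identify.

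First I would justify that the homotopy $H$ pictured in Figure \ref{homotopyII} is a genuine deformation retraction of $X$ onto $S$, so that $X \simeq S$. This is consistent with Ghrist's theorem \cite{ghrist2002}: the lollipop $L$ has a single essential vertex (the point at which the interval is attached to the circle, the only vertex of degree greater than two), so $C^2(L)$ deformation retracts onto a one-dimensional subcomplex.

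Next I would pin down the homotopy type of $S$. As indicated in Figures \ref{skelII} and \ref{3wedged}, the skeleton is homeomorphic to a chain of three circles with two pendant intervals; contracting each interval and the lower arc of each circle (each of which is contractible) produces a homotopy equivalence $S \simeq S^1 \vee S^1 \vee S^1$. Combining with the previous step gives $X \simeq S^1 \vee S^1 \vee S^1$, and Theorem \ref{homotopyInv} then yields $TC(X) = TC(S^1 \vee S^1 \vee S^1)$.

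Finally, the wedge $S^1 \vee S^1 \vee S^1$ is a graph with one vertex and three loops, so that $m = 3$, $n = 1$, $k = 1$, and its first Betti number is $b_1 = m - n + k = 3$. Since $b_1 > 1$, Proposition \ref{betti} gives $TC(S^1 \vee S^1 \vee S^1) = 3$, whence $TC(C^2(L)) = 3$. The only genuine content beyond bookkeeping lies in the two homotopy equivalences of the middle steps; I expect the main obstacle to be verifying that the explicit homotopy $H$ truly deformation retracts $X$ onto $S$ (rather than merely onto some homotopy equivalent subspace) and that the claimed homeomorphism of $S$ with the chain of three circles is correct. Both are essentially read off from the figures, but they require care with the identifications near the removed diagonal, where the gluing of the circle and interval pieces of $L$ interact.
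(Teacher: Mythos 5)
Your proposal follows the paper's proof essentially verbatim: both establish $C^2(L)\simeq S\simeq S^1\vee S^1\vee S^1$ via the skeleton, invoke homotopy invariance of $TC$, and conclude with Farber's formula using $b_1 = 3-1+1 = 3$. Your added remarks about verifying the deformation retraction carefully (via Ghrist's theorem) are a reasonable elaboration of what the paper asserts from its figures, but the route is the same.
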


\begin{proof}
We have shown that $C^2(L) \simeq S^1\vee S^1\vee S^1$. By proposition \ref{homotopyInv}, we know that $TC(C^2(L))= TC(S^1\vee S^1\vee S^1)$. 
Since the first Betti number of a wedge of three circles is $b_1=3-1+1=3$, we have that $TC(S^1\vee S^1\vee S^1)=3$ by proposition \ref{betti}. Therefore, $TC(C^2(L))=3$.
\end{proof}
We conclude that for two robots moving on a lollipop graph, any MPA would require at least three continuous instructions.

\subsection{Motion Planning Algorithm}
We proved previously that $TC(X)=3$ if $X=C^2(L)$, which defines the minimum number of continuous sections in $X \times X$. Recall that although the set of instructions of the algorithm will be derived in the configuration space, its execution will be run in the physical space.

If the state corresponding to the two robots' positions is on the skeleton $S$, we know that in the physical space the robots are one half unit apart from each other. Recall that the distance between two robots in $L$ is the minimal length of the paths in $L$ joining them. See figure \ref{shortestD} for examples of distances between robots.

\begin{figure}[H]
\centering
\includegraphics[width=0.97\textwidth,keepaspectratio]{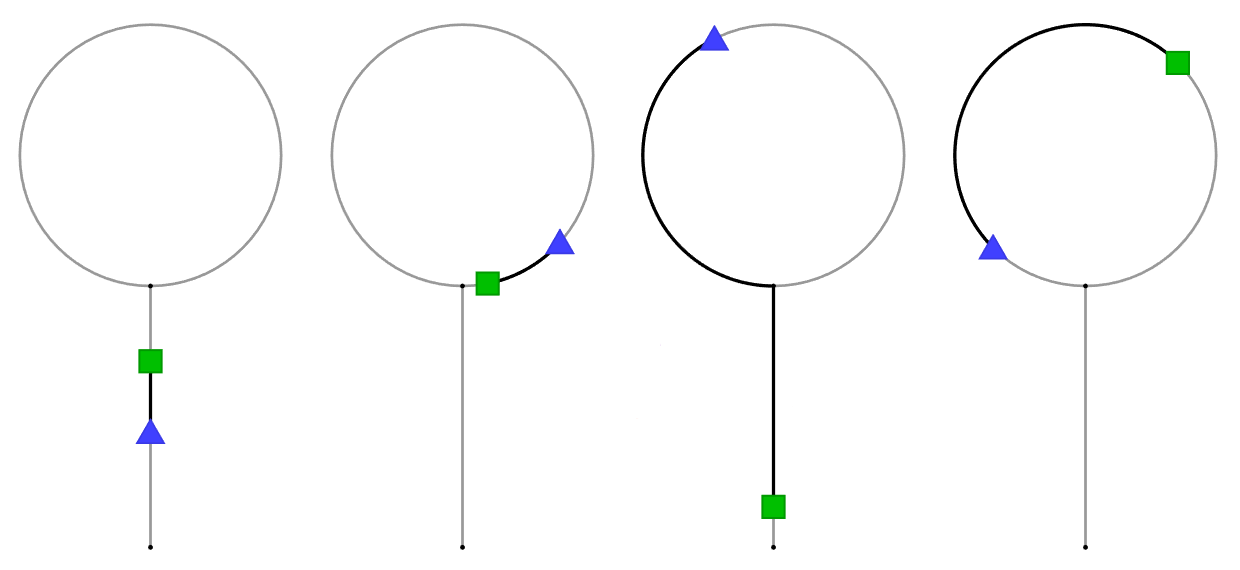}
\caption{Distance between two robots}
\label{shortestD}
\end{figure}

\begin{defn}
Two robots are in \emph{generalized antipodal positions}  in $L$ if  the distance between them is equal to half unit. See figure \ref{gAP}.
\end{defn}

Observe that the skeleton $S=\{(a,b)\in L\times L | a \mbox{ is generalized antipodal to } b\}$ corresponds to the configurations in which the robots are in generalized antipodal position.

\begin{figure}[H]
\centering
\includegraphics[width=0.75\textwidth,keepaspectratio]{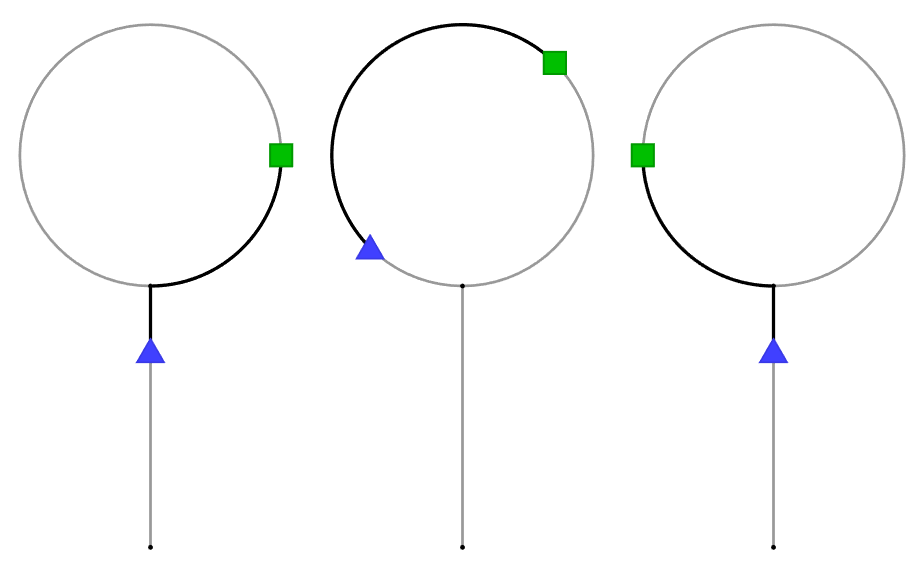}
\caption{Generalized antipodal positions}
\label{gAP}
\end{figure}

Consider the homotopy $H:X \times I \to X$ given by the projection into the skeleton $S$ following the traces of the homotopy as in figure \ref{homotopyII}.

Let ${x_i}$ and $x_f$ be the initial and final states of the robots. We call the \emph{initial generalized antipodal} state $x_i^\prime\:=\:H_1(x_i)$ the image of the initial state in the skeleton $S$ by the homotopy. Likewise, we call the \emph{final generalized antipodal} state $x_f^\prime\:=\:H_1(x_f)$ the projection per homotopy of the final state in  $S$.

As before, the idea of our algorithm will be to move the initial and final states $x_i$ and $x_f$ to their generalized antipodal states $x_i^\prime$ and $x_f^\prime$ respectively, and then apply in the skeleton $S$ a new algorithm to move from $x_i^\prime$ to $x_f^\prime$ that we will explain next.

\subsubsection{Moving the states within the skeleton}
Recall that the skeleton $S$ is the graph shown in figure \ref{chain}. Let $V$ be the set of vertices of the graph $S$. We denote $\tilde V$ the {\em extended set of vertices} given by $\tilde V= V\cup I_1\cup I_2$.

\begin{figure}[H]
\centering
\begin{overpic}
[width=0.75\textwidth,keepaspectratio]{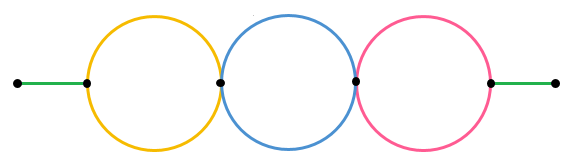}
\put(8,10){$I_1$}
\put(25,22){$S_1$}
\put(48.5,22){$S_2$}
\put(72.5,22){$S_3$}
\put(91,10){$I_2$}
\end{overpic}
\caption{The skeleton $S$}
\label{chain}
\end{figure}

We will decompose the Cartesian product $S\times S$ in three regions. The first region $U_1$ will consist of the pairs of points in $S$, initial and final, such that each is an extended vertex, $U_1=\tilde V\times \tilde V$. The instruction here will be to go shortest path. If traversing a circle, choose always the counterclockwise direction.

The second region is the set of points that are antipodal to each other in any circle but none of them is a vertex, $$U_2= \{(x_i,x_f)\in \hat S_j\times \hat S_j |\; x_i \mbox{ is antipodal to } x_f, \mbox{ for } j=1,2,3\}$$
where $ \hat S_j$ is the circle $S_j$ minus the vertices. The instruction in this region is to go counterclockwise in $S_j$ from the initial to the final state.

The third region is the rest of the Cartesian product, $U_3=(S\times S)\setminus (U_1\cup U_2)$. The instruction in this case is to go following the shortest path and choosing the counterclockwise direction whenever traversing any circle.

\begin{figure}[H]
\centering
\begin{overpic}
[width=0.675\textwidth,keepaspectratio]{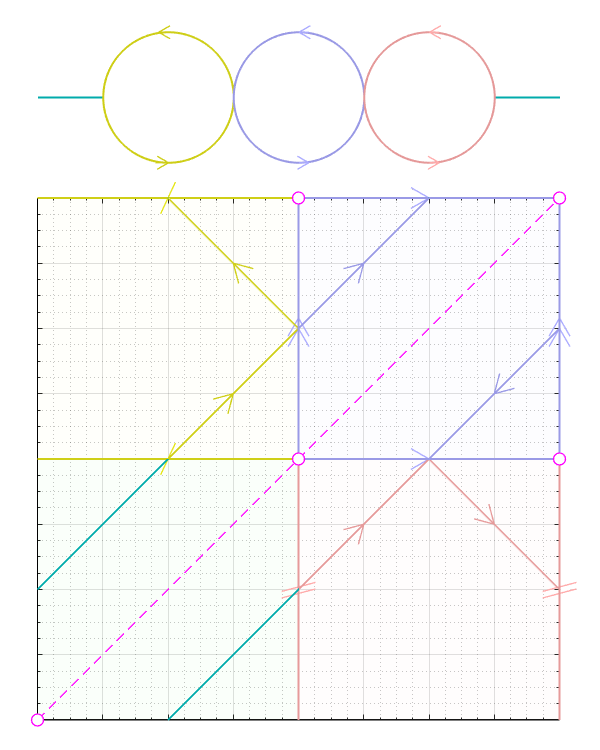}
\put(21,56){$S_1$}
\put(56,56){$S_2$}
\put(56,21){$S_3$}
\put(21,86){$S_1$}
\put(38,86){$S_2$}
\put(56,86){$S_3$}
\end{overpic}
\caption{Counterclockwise directions in each circle $Sj \subset S$}
\label{ccNotation}
\end{figure}	

\subsubsection{Algorithm in the configuration space} 
Let $V_1$ be a small open neighborhood of $U_1$, $V_2$  an open neighborhood of $U_2$ and $V_3=U_3$.
\begin{enumerate}
\item Preliminary step: Move initial state $x_i$  to its generalized antipodal state $x_i^\prime$ on $S$ following the path  $H_t(x_i)$.
\item Main step: While on $S$, if the two generalized antipodal states $x_i^\prime$ and $x_f^\prime$ are in $V_1$, then go to $x_f^\prime$ following shortest path. When crossing any circle $S_j \subset S$ go in counterclockwise direction (see figure \ref{ccNotation}). If $x_i^\prime$ and $x_f^\prime$ are in $V_2$, then move $x_i^\prime$ in counterclockwise direction until it reaches $x_f^\prime$. Otherwise, move $x_i^\prime$ to $x_f^\prime$ following shortest path and in counterclockwise direction whenever crossing any circle $S_j \subset S$.

\item Final step: Move the final antipodal state $x_f^\prime$ back to the final state $x_f$ following the reverse path of $H_t(x_f)$.
\end{enumerate}

\subsubsection{Algorithm in the physical space $L$} 
Our aim is to identify in the physical space the positions corresponding to the different regions described in the configuration space.

First we define certain distinguished positions in the configuration space that will correspond to the generalization of the idea of interchanging robots in the physical space.

\begin{defn}
Two states $x_i=(A_i,B_i)$ and $x_f=(A_f,B_f)$ in $S_j\subset S$ are {\em generalized swapped states} if  
\[ \begin{cases*}
                     A_i=B_f<{1\over 2}  \mbox{ or }
                    A_f=B_i>{1\over 2}
                    & if  $j=1$  \\
                     A_i=B_f \mbox{ and } A_f=B_i & if  $j=2$  \\
                 A_i=B_f>{1\over 2} \mbox{ or }
                     A_f=B_i <{1\over 2} & if  $j=3$
                 \end{cases*} \]

\label{swapedDefinition}
\end{defn}

%\todo[inline]{check this defn, i think it is correct, in all cases middle circle or side ones, check}

The generalized swapped states depicted in figure \ref{gssCS} correspond in the physical space to the positions depicted in figure \ref{gss}.

\begin{remark}
Observe that if initial and final states are generalized swapped states, then the initial and final positions of at least one robot are antipodal in the circle of the lollipop $L$.
\end{remark}

\begin{figure}[H]
\centering
\begin{overpic}
[width=0.7\textwidth,keepaspectratio]{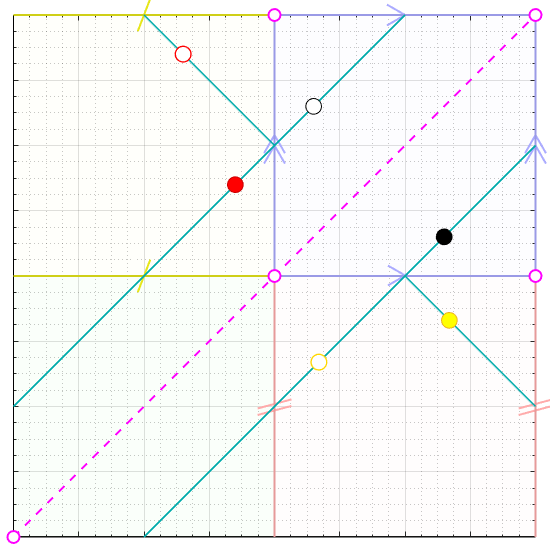}
\put(25,73){$S_1$}
\put(72,73){$S_2$}
\put(72,25){$S_3$}
\end{overpic}
\caption{Generalized swapped states in the configuration space}
\label{gssCS}
\end{figure}	

\begin{figure}[H]
\centering
\begin{overpic}
[width=0.75\textwidth,keepaspectratio]{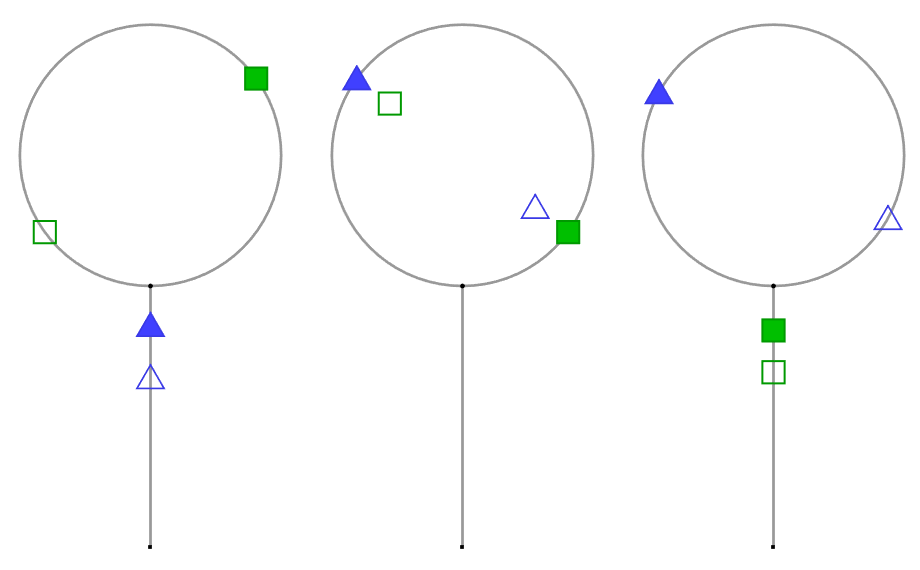}
\put(12,55){in $S_1$}
\put(46,55){in $S_2$}
\put(80,55){in $S_3$}

\end{overpic}
\caption{Generalized swapped states in the physical space}
\label{gss}
\end{figure}	

We define \emph{the extended interval} $\tilde I$ in $L$ as the interval $I$ together with the pole $P$ in $S^1$ as shown in figure \ref{extendedI}, $\tilde I = I\cup \{P\}$.

\begin{defn}
We say that two robots in generalized antipodal position are in {\em generalized vertex position} if they both are in the extended interval $\tilde I$. 
\end{defn}

\begin{figure}[H]
\centering
\begin{overpic}
[width=0.28\textwidth,keepaspectratio]{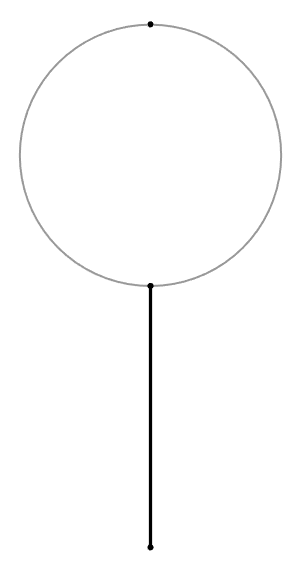}
\put(25,98){$P$}
\end{overpic}
\caption{Extended interval $\tilde I$}
\label{extendedI}
\end{figure}
For robots in generalized vertex position, the initial and final positions of the two robots are either in the same or reverse order.

\begin{defn}
Two pairs of positions $(A_i,B_i)$ and $(A_f,B_f)$ in $\tilde I$ are said to be in the \emph{same order}  if there exists a collision-free translation motion from  $A_i$ to $A_f$ and $B_i$ to $B_f$ within the interval. If no such translation motion exists, then the two pairs of positions are said to be in reverse order.
\end{defn}

\begin{remark}
We observe that initial and final states in generalized vertex position are in the same order if the initial and final positions of the first robot are both in the same half of the interval edge of the lollipop.
\end{remark}

The instructions of the motion planning algorithm in $L$ will be as follows:

\begin{enumerate}
\item Preliminary step: 
 If the distance between the two robots is more than half unit (figure \ref{More}), then move the robot that is closer to the beginning of the interval towards the other robot until both robots are at generalized antipodal position. The other robot remains stationary. Otherwise (figure \ref{Less}), move both robots away from each other (and stay in the circle if moving in the circle) until both robots reach generalized antipodal position.
 \begin{figure}[H]
\centering
\includegraphics[width=0.5\textwidth,keepaspectratio]{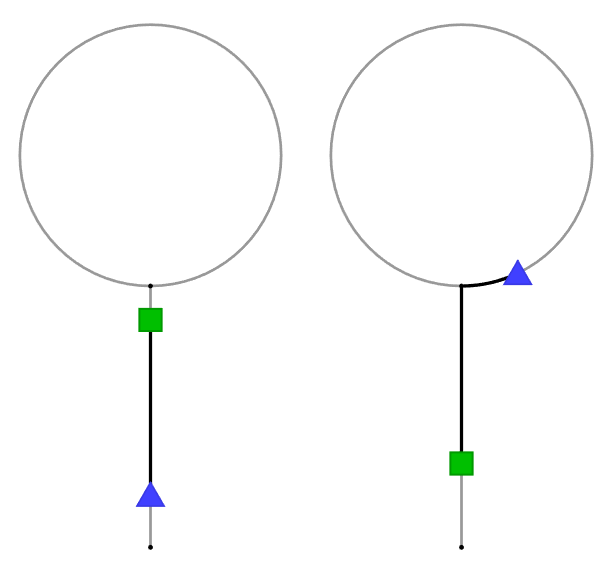}
\caption{Distance greater than half unit.}
\label{More}
\end{figure}

The ratio between the speeds at which the robots are moving is given by the slopes of the traces of the homotopy in figure \ref{homotopyII}. Repeat this procedure to move the final positions to a generalized antipodal position as well.

\begin{figure}[H]
\centering
\includegraphics[width=1\textwidth,keepaspectratio]{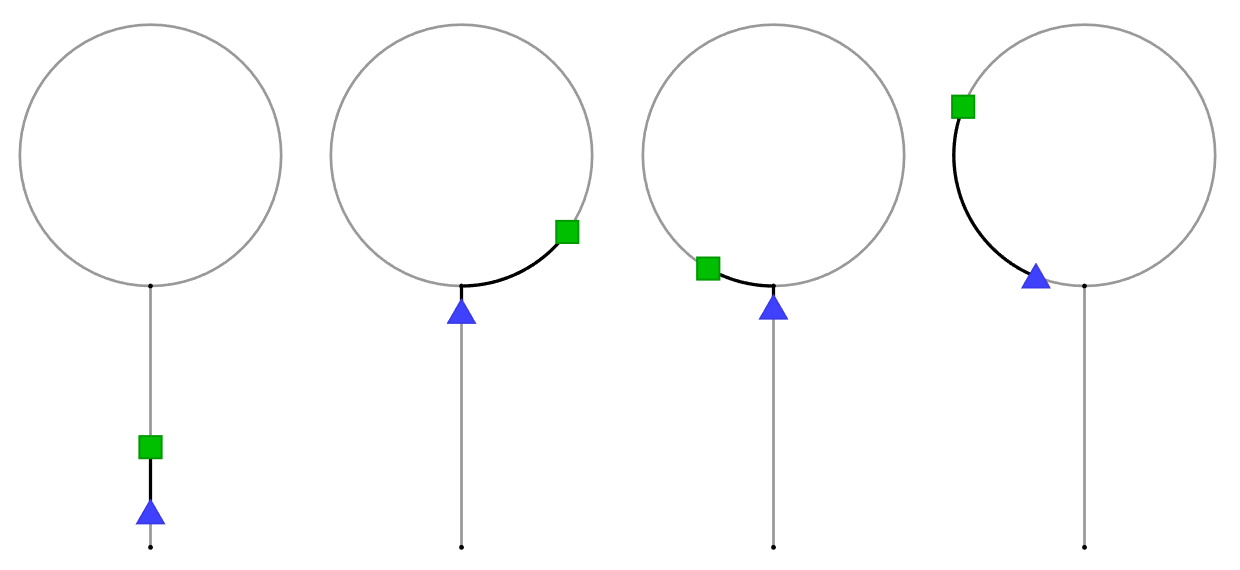}
\caption{Distance shorter than half unit.}
\label{Less}
\end{figure}

\item Main step:
\begin{itemize}
\item If the initial and final generalized antipodal positions are in generalized vertex position, then move the two robots to their final positions following shortest path if they are in the same order, or go around the circle in counterclockwise direction towards the final positions if they are in reverse order.

\item If the initial and final generalized antipodal positions are generalized swapped positions, then move whichever robot is in the circle counterclockwise to its final destination while remaining in the circle. The other robot moves accordingly so that to stay at half unit distance away from the other.

\item Otherwise, for all remaining configurations, move both robots following shortest paths, and counterclockwise whenever in the circle, until they reach their final positions.
\end{itemize}
 
\item Final step: Move both robots to their final positions following shortest paths. Recall that the ratio of the speeds at which the two robots will be moving in this case is given by the slopes of the traces of the homotopy in figure \ref{homotopyII}.

\end{enumerate}
\subsection{Running the algorithm in the physical space $L$}
We will show now some sample cases for each of the domains of continuity.
\begin{enumerate}
\item{Domain of continuity $V_1$: Extended Interval}
\begin{figure}[H]
\centering
\begin{overpic}
[width=1\textwidth,keepaspectratio]{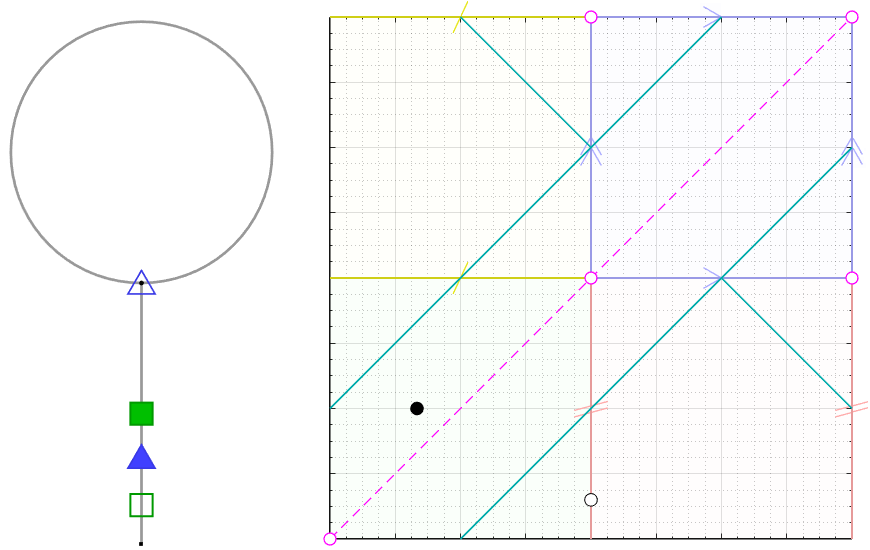}
\end{overpic}
\caption{Initial and final states in $V_1$}
\end{figure}

\begin{figure}[H]
\centering
\includegraphics[width=1\textwidth,keepaspectratio]{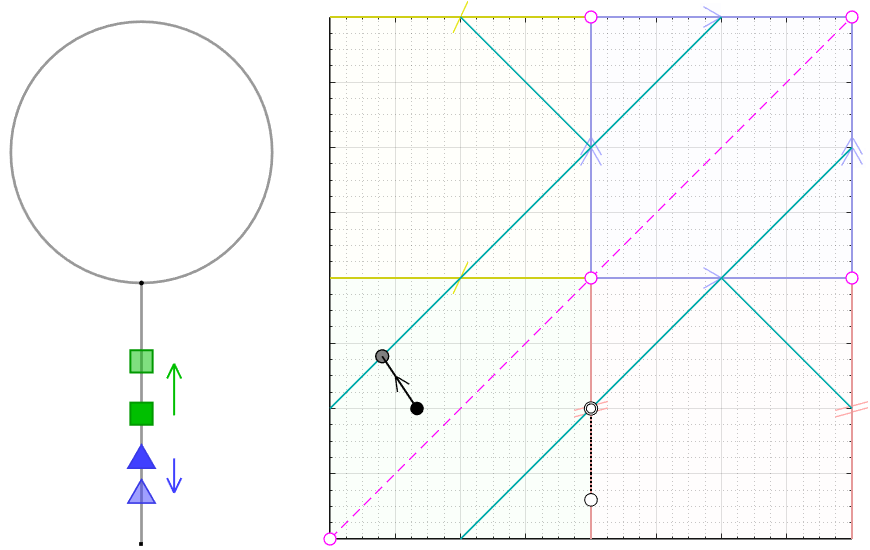}
\caption{Preliminary step in $V_1$}
\end{figure}

\begin{figure}[H]
\begin{adjustwidth}{-1.67cm}{-1.50cm}
\centering
\includegraphics[width=1.3\textwidth,keepaspectratio]{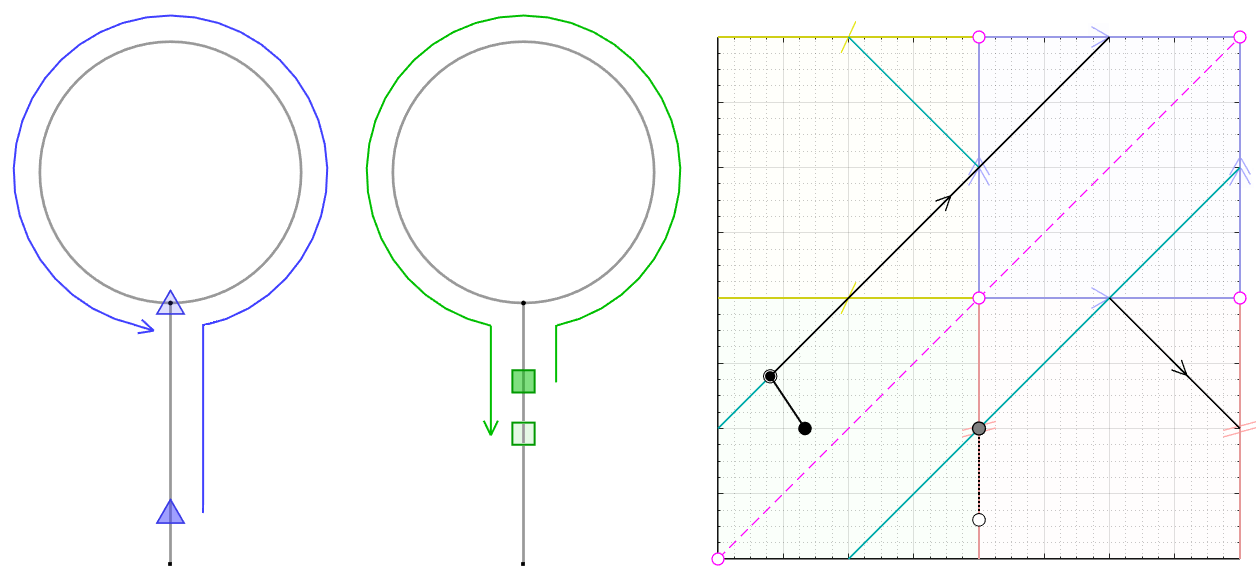}
\caption{Main step in $V_1$}
\end{adjustwidth}
\end{figure}

\begin{figure}[H]
\centering
\includegraphics[width=1\textwidth,keepaspectratio]{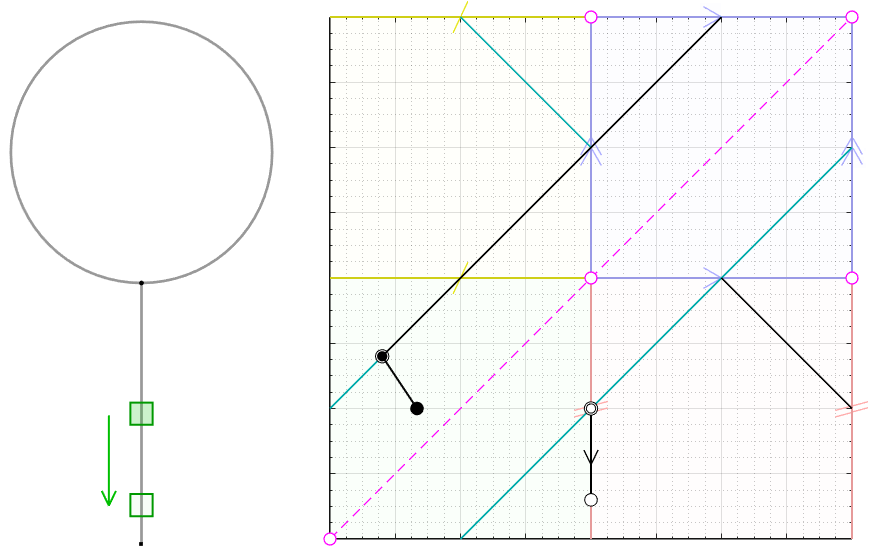}
\caption{Final step in $V_1$}
\end{figure}

\item{Domain of continuity $V_2$: Generalized swapped positions.}

\begin{figure}[H]
\centering
\includegraphics[width=1\textwidth,keepaspectratio]{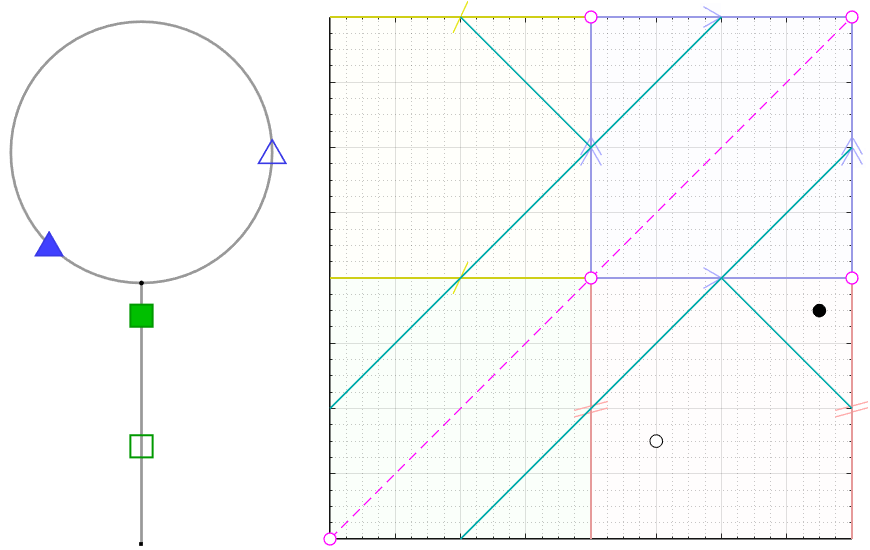}
\caption{Initial and final positions in  $V_2$}
\end{figure}

\begin{figure}[H]
\centering
\includegraphics[width=1\textwidth,keepaspectratio]{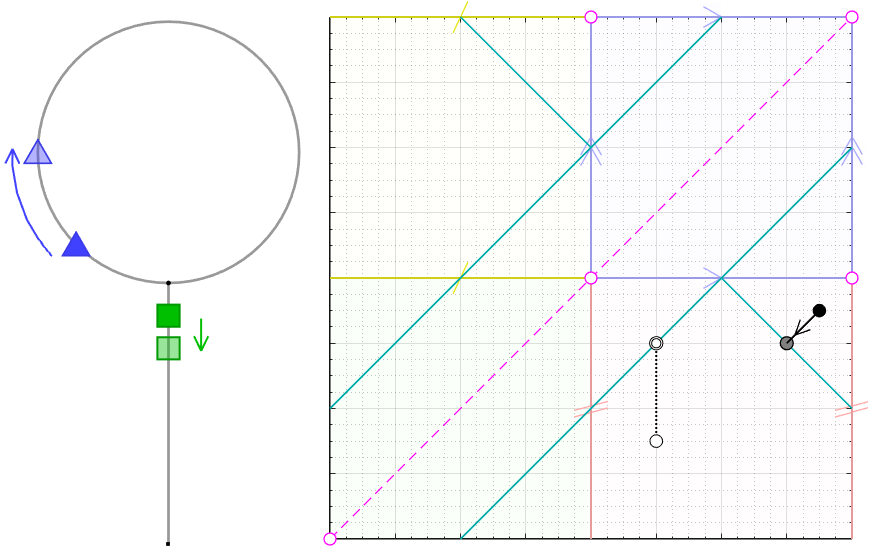}
\caption{Preliminary step in $V_2$}
\end{figure}

\begin{figure}[H]
\centering
\includegraphics[width=1\textwidth,keepaspectratio]{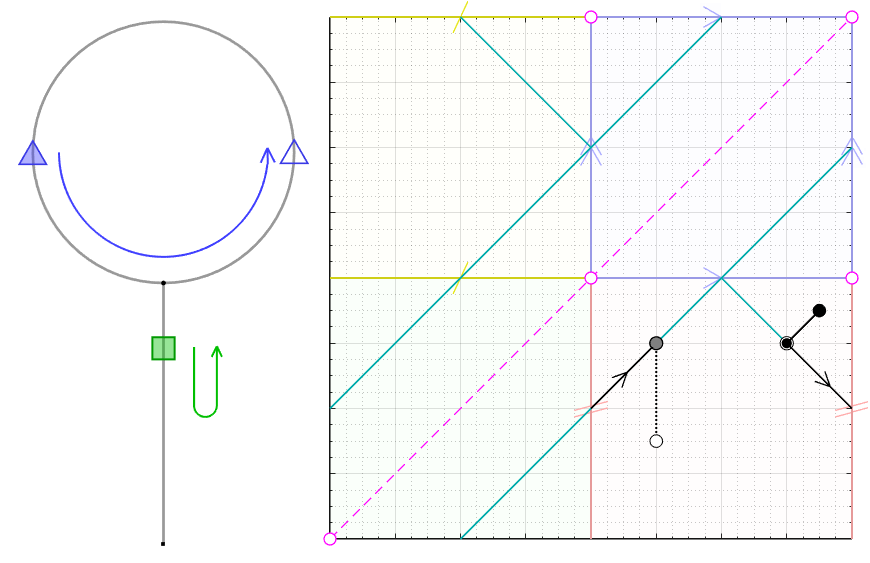}
%\todo[inline]{only one figure same style than the other cases main step}
\caption{Main step in $V_2$}
\end{figure}

\begin{figure}[H]
\centering
\includegraphics[width=1\textwidth,keepaspectratio]{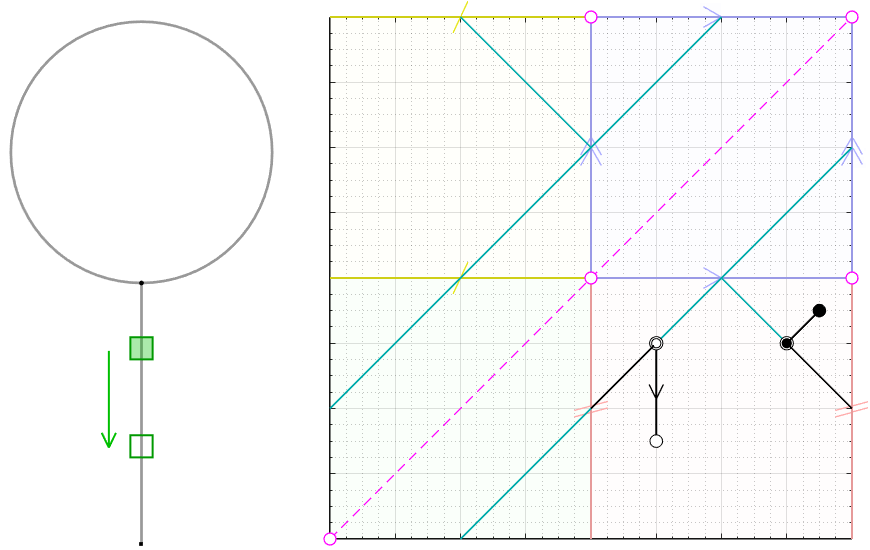}
\caption{Final step in $V_2$}
\end{figure}

\item{Domain of continuity $V_3$}

\begin{figure}[H]
\centering
\includegraphics[width=1\textwidth,keepaspectratio]{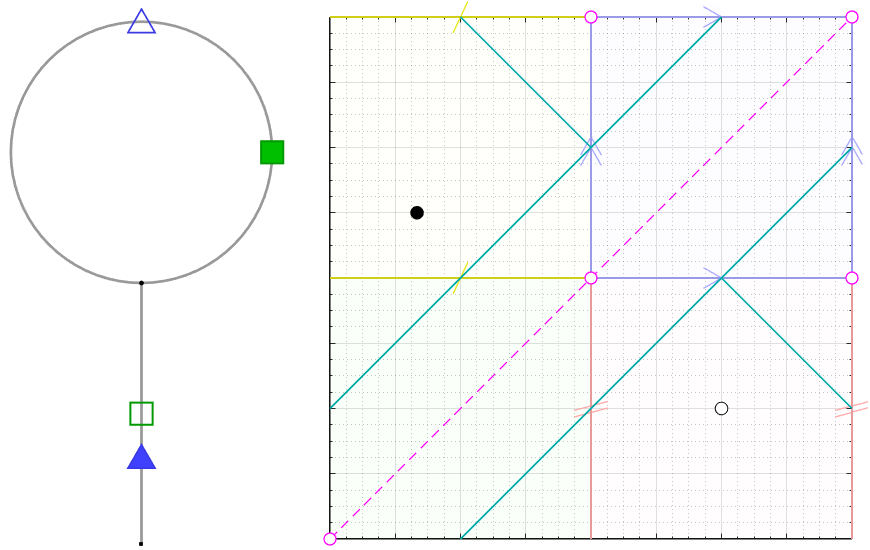}
\caption{Initial and final states in $V_3$}
\end{figure}

\begin{figure}[H]
\centering
\includegraphics[width=1\textwidth,keepaspectratio]{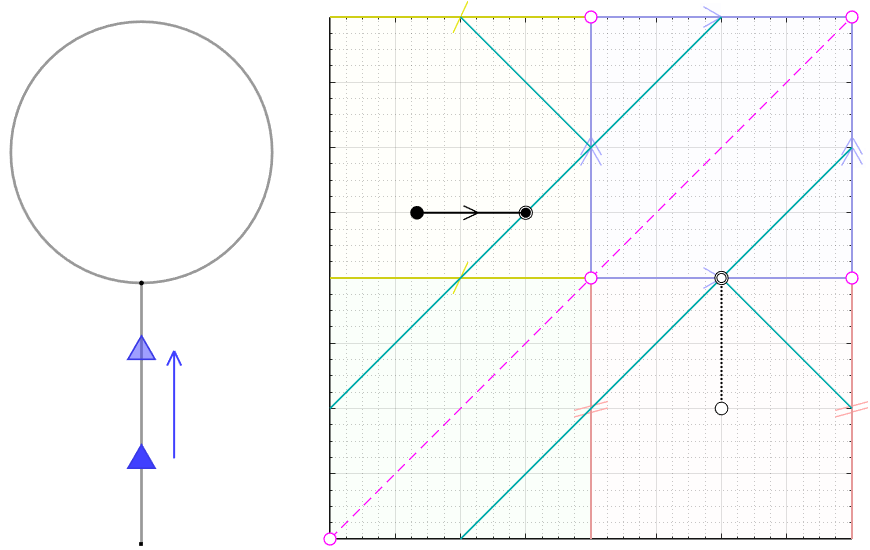}
\caption{Preliminary step in $V_3$}
\end{figure}

\begin{figure}[H]
\begin{adjustwidth}{-2.00cm}{-1.50cm}
\centering
\includegraphics[width=1.35\textwidth,keepaspectratio]{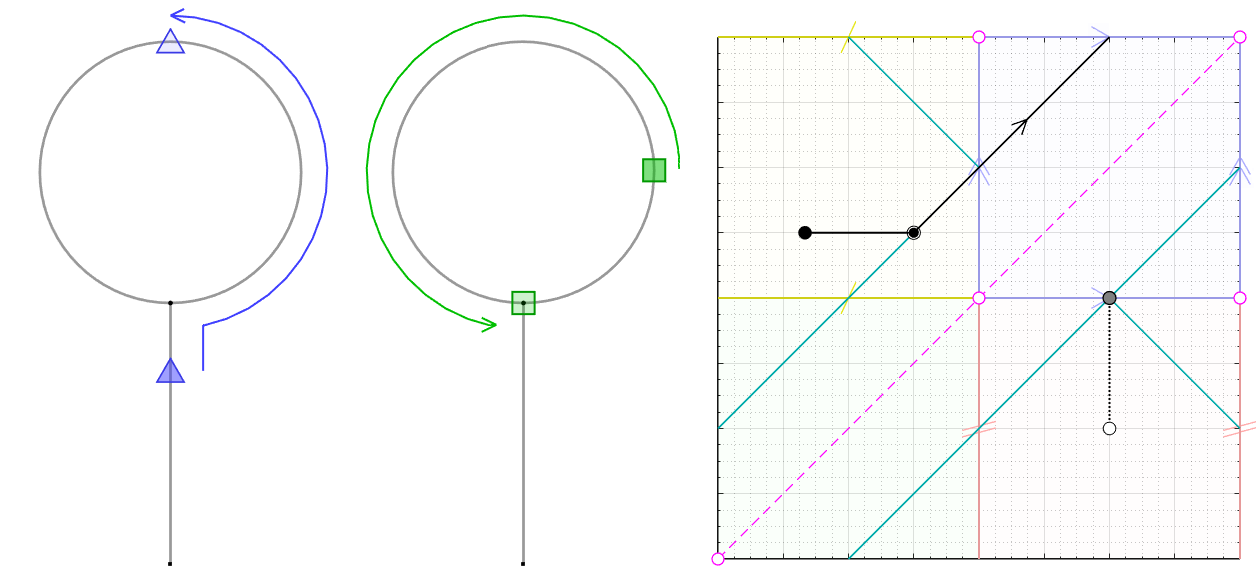}
\caption{Main step in $V_3$}
\end{adjustwidth}
\end{figure}

\begin{figure}[H]
\centering
\includegraphics[width=1\textwidth,keepaspectratio]{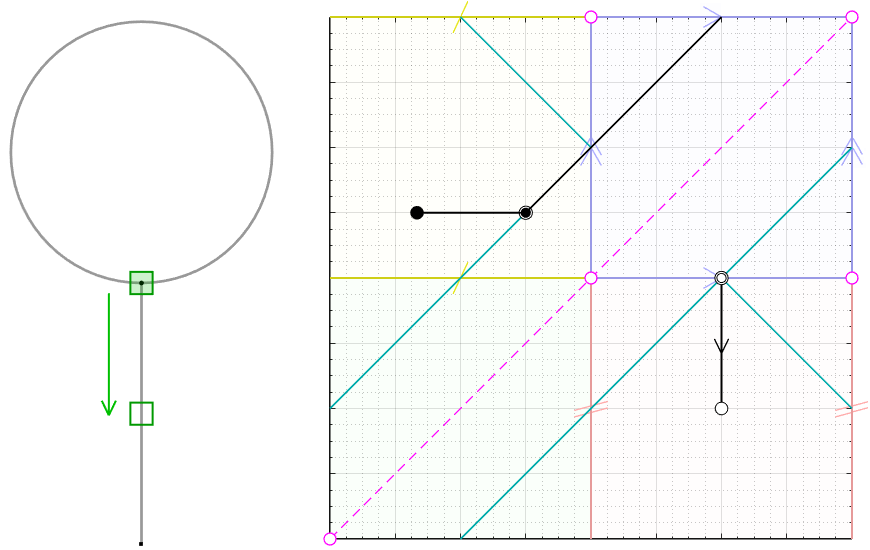}
\caption{Final step in $V_3$}
\end{figure}
\end{enumerate}

\section{Algorithm implementation}
A demonstration video for both algorithms implemented in MATLAB is available online at \url{https://figshare.com/articles/Demonstration_video_for_collision-free_motion_planning_algorithms_in_topological_robotics/8019383} (or via the shortened web link: \url{https://bit.ly/demoVideo_MPAs})
\bibliography{AB}
\bibliographystyle{plain}
\end{document}